\title{A tale of two balloons}
\author{Omer Angel
%  \and Thomas Budzinski
  \and Gourab Ray
  \and Yinon Spinka}
\date{March 2021}
  \crefname{theorem}{Theorem}{Theorems}
  \crefname{thm}{Theorem}{Theorems}
  \crefname{mainthm}{Theorem}{Theorems}
  \crefname{lemma}{Lemma}{Lemmas}
  \crefname{lem}{Lemma}{Lemmas}
  \crefname{remark}{Remark}{Remarks}
  \crefname{prop}{Proposition}{Propositions}
  \crefname{defn}{Definition}{Definitions}
  \crefname{corollary}{Corollary}{Corollaries}
  \crefname{section}{Section}{Sections}
  \crefname{figure}{Figure}{Figures}
  \crefname{problem}{Problem}{Problems}
\newtheorem{thm}{Theorem}%[section]
\newtheorem{claim}[thm]{Claim}
\newtheorem{lemma}[thm]{Lemma}
\newtheorem{prop}[thm]{Proposition}
\theoremstyle{definition}
\newtheorem{remark}[thm]{Remark}
\newtheorem*{remark*}{Remark}
\newtheorem{problem}[thm]{Problem}
\renewcommand{\P}{\mathbb P}
\newcommand{\Z}{\mathbb Z}
\newcommand{\E}{\mathbb E}
\newcommand{\R}{\mathbb R}
\newcommand{\N}{\mathbb N}
\newcommand{\HH}{\mathbb H}
\newcommand{\T}{\mathbb T}
\newcommand{\eps}{\varepsilon}
\newcommand{\cB}{\mathcal B}
\newcommand{\cE}{\mathcal E}
\newcommand{\cT}{\mathcal T}
\newcommand{\bn}{\mathbf n}
\newcommand{\bu}{\mathbf u}
\begin{document}

\maketitle
\begin{abstract}
  From each point of a Poisson point process start growing a balloon at rate 1.
  When two balloons touch, they pop and disappear.
  Is every point contained in balloons infinitely often or not?
  We answer this for the Euclidean space, the hyperbolic plane and regular trees.
  
  The result for the Euclidean space relies on a novel $0$-$1$ law for stationary processes.
  Towards establishing the results for the hyperbolic plane and regular trees, we prove an upper bound on the density of any well-separated set in a regular tree which is a factor of an i.i.d.\ process. 
\end{abstract}

%%%%%%%%%%%%%%%%%%%%%%%%%%%%%%%%%%%%%%%%%%%%%%%%%%%%%%%%%%%%%%%%%%
\section{Introduction}

We consider the following process on a metric space $(\Omega,\rho)$.
Initially, at time $0$, we are given a point process $\Pi$ in $\Omega$.
The \textbf{balloon process} on $\Omega$ started from $\Pi$ is defined as follows.
Start growing a ball, or \textbf{balloon}, around each point $x$ in the support $[\Pi]$, so that at time $t$ the balloons are balls of radius $t$.
At any time that two of the balloons touch each other, they both immediately pop and are removed.
We emphasize that there is no additional randomness in the process beyond the initial point process $\Pi$.
It is not a priori clear that this process is well defined, and indeed this requires $\Pi$ to satisfy some simple conditions; for cases of interest, these are indeed satisfied.

While the above can be fairly general, our main interest is when $(\Omega,\rho,\mu)$ is a metric measure space and $\Pi$ is the Poisson process on $\Omega$ with intensity measure $\mu$. In this case, we simply call the balloon process the \textbf{Poisson balloon process} on $\Omega$. See \cref{sec:basic} for some basic properties of this process, including the fact that it is well defined.

%While we strive to make our statements general, the reader may well keep that example in mind throughout.
%When $(\Omega,\mu,d)$ is a metric measure space and $\Pi$ is the Poisson process on $\Omega$ with intensity $\mu$, we call this simply the \textbf{Poisson balloon process} on $\Omega$.

\begin{figure}[h]
 \centering
 \includegraphics[scale=0.25]{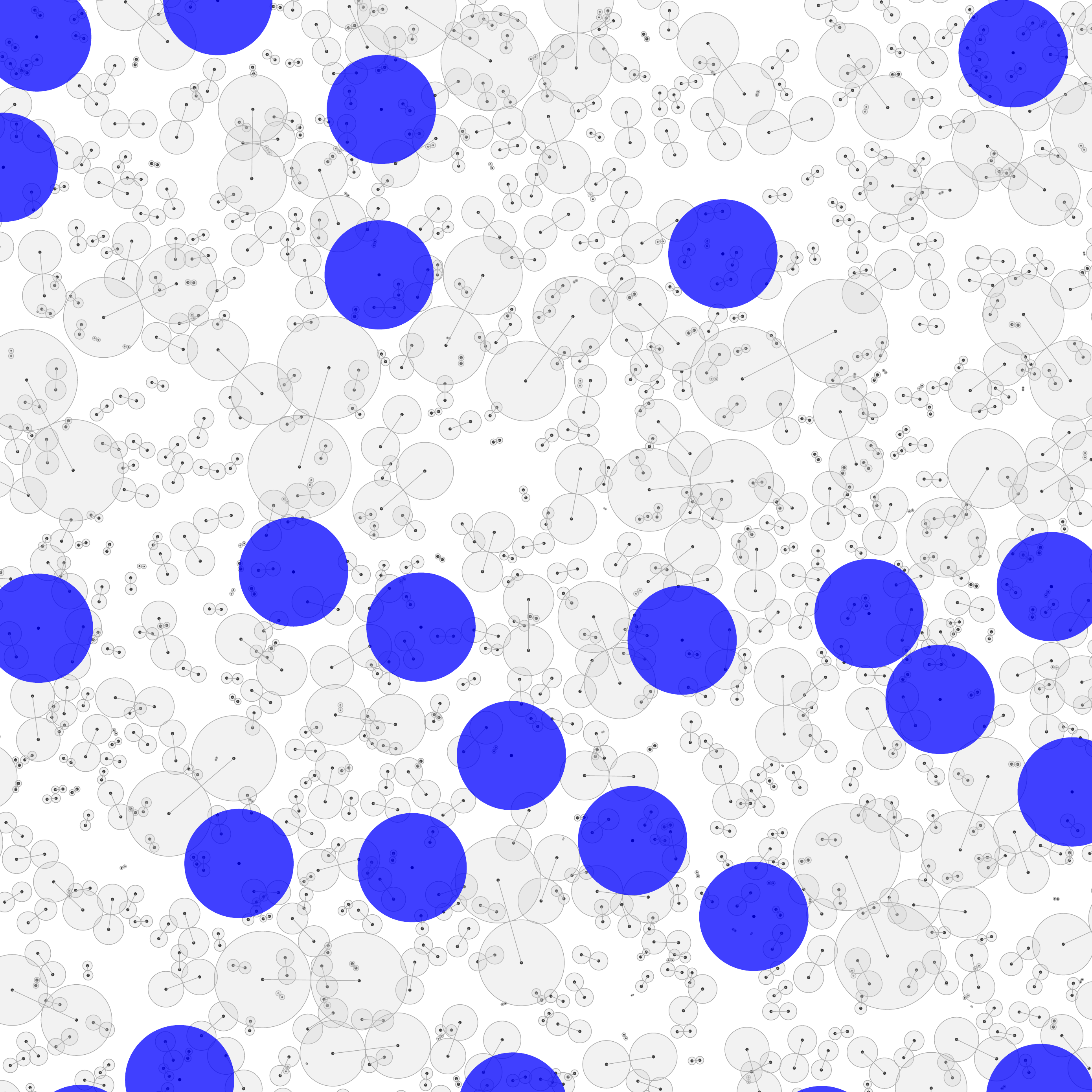}\qquad
 \includegraphics[scale=0.25]{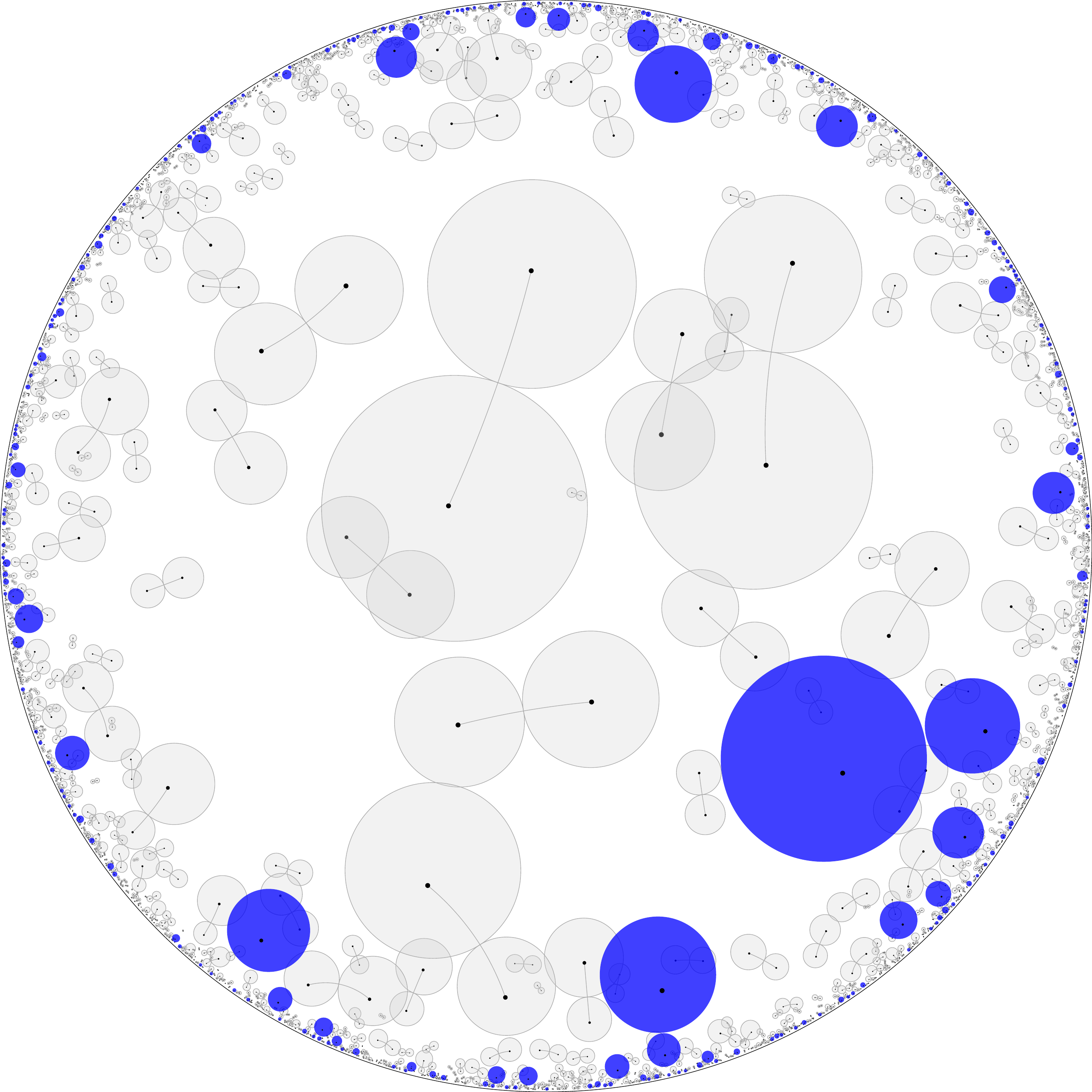}
 \caption{Simulations of the Poisson balloon process in the Euclidean plane (left) and the hyperbolic plane (right), shown at a fixed time $t$.
   Balloons still active at time $t$ are shown in blue.
   Balloons which have popped before time $t$ are shown in gray at their size  when they popped.
   Edges connect the centers of balloons which popped each other.}
\end{figure}

%This balloon process is closely related to the so-called stable matching between the points of $[\Pi]$.
%Indeed, the same arguments that show that the stable matching is well defined, and is unique also give that the growth process above is well defined for all times $t$.
%For completeness, we remark that it is shown in Lemma 15 of \cite{holroyd2009poisson} that this is the case for any point process with distinct distances and no infinite descending chain (i.e.\ sequence $x_n\in[\Pi]$ with $\|x_n-x_{n+1}\|$ decreasing). These are properties the Poisson process is easily shown to possess almost surely \note{Do we need to show for Hyperbolic space?}.

Denote by $\Pi_t$ the point process of centers of balloons that are still active at time $t$.
The balloons at time $t$ form a disjoint collection $\{B(x,t) : x\in [\Pi_t]\}$, where $B(x,r)$ denotes the ball of radius $r$ around $x$.
The union of this collection is the set of points covered by a balloon at time $t$ and is denote by
\[ S_t := \{x\in\Omega : \rho(x,[\Pi_t]) \leq t\} ,\]
where $\rho(x,S)$ is the distance between a point $x$ and a set $S$.
We say that the balloon process is \textbf{recurrent} if, almost surely, for every $x\in\Omega$, the set of times $t$ at which $x\in S_t$ is unbounded.
We say that the balloon process is \textbf{transient} if, almost surely, for every $x\in\Omega$, the set of times $t$ at which $x\in S_t$ is bounded.
We say the balloon process is transient (or recurrent) at $x$ if these statements hold for a given $x\in\Omega$.
Note that with no further assumptions, it is possible for the process to be neither transient nor recurrent (if the type depends on $x$, or if the events have non-trivial probabilities).
Fix a point $o\in \Omega$ called the origin.
Denote by
\[ R_t := \rho(o,[\Pi_t]) \]
%% R_t := \min \{ \text{distance between $x$ and the origin} : x \in [\Pi_t] \}$$
the distance to the center of the balloon nearest to the origin at time $t$.
Note that the origin belongs to $S_t$ if and only if $R_t \le t$.
In particular, if $\liminf R_t/t<1$ almost surely then the balloon process is recurrent, and if $\liminf R_t/t>1$ almost surely then it is transient.

Our main results give a surprising difference between the Euclidean space and the hyperbolic plane in this regard:

\begin{thm}\label{thm:main}
  For any $d \ge 1$, the Poisson balloon process on $\R^d$ is recurrent.
  Moreover,
  \[ \liminf_{t \to \infty} \frac{R_t}{t} = 0 \text{ almost surely.} \]
\end{thm}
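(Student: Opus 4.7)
My plan is to split the argument into a $0$--$1$ law pinning $L := \liminf_{t\to\infty} R_t/t$ to a deterministic value $c_* \in [0,\infty]$, followed by a quantitative step showing $c_* = 0$.

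For the $0$--$1$ law, extend the definition to a reference point $x$ by setting $R_t^x := \rho(x, [\Pi_t])$ and $L(x,\omega) := \liminf_{t\to\infty} R_t^x/t$. The triangle inequality $|R_t^x - R_t^y| \le |x-y|$ gives $|L(x,\omega) - L(y,\omega)| \le \limsup_{t\to\infty} |x-y|/t = 0$, so $L(\cdot,\omega)$ is constant in $x$ for every realization $\omega$. Writing $\tau_y$ for the translation of the configuration by $y$, equivariance of the balloon process gives $R_t^0(\tau_y\omega) = R_t^{-y}(\omega)$, hence $L(0, \tau_y\omega) = L(-y, \omega) = L(0, \omega)$ by the constancy in $x$. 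Thus $L(0,\cdot)$ is invariant under every translation of the Poisson process, and ergodicity of $\Pi$ under $\R^d$-translations forces $L \equiv c_*$ almost surely for some deterministic $c_* \in [0,\infty]$.

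For the quantitative step, fix $\eps \in (0,1)$. Since the surviving centers are pairwise at distance at least $2t$, at most one of them lies in $B(o, \eps t)$, so
\[
\P(R_t \le \eps t) \;=\; \E\,\bigl|[\Pi_t] \cap B(o, \eps t)\bigr| \;=\; c_d\, \eps^d\, \lambda_t\, t^d,
\]
where $\lambda_t$ is the intensity of $[\Pi_t]$ and $c_d$ is the volume of the unit ball. If $\limsup_{t\to\infty} \lambda_t\, t^d > 0$, choose $\delta>0$ and a sequence $t_n \to \infty$ with $\lambda_{t_n} t_n^d \ge \delta$; then $\P(R_{t_n} \le \eps t_n) \ge c_d \eps^d \delta$ for every $n$, and reverse Fatou gives $\P(\liminf_n R_{t_n}/t_n \le \eps) \ge c_d \eps^d \delta > 0$. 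The $0$--$1$ law upgrades this to $c_* \le \eps$, and letting $\eps \to 0$ gives $c_* = 0$.

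The main obstacle is the density lower bound $\limsup_{t\to\infty} \lambda_t\, t^d > 0$. The matching upper bound $\lambda_t \le (c_d t^d)^{-1}$ is immediate packing, and the mean-field ODE $d\lambda_t/dt \sim -\lambda_t^2\, t^{d-1}$ heuristically predicts $\lambda_t \asymp t^{-d}$, but rigorously excluding faster decay requires control of the global dynamics. A natural route uses the scaling identity $R_t^{(\lambda=1)}/t \stackrel{d}{=} R_1^{(\lambda=t^d)}$, which reduces the density lower bound to the saturation statement that, as the Poisson intensity diverges, the density of surviving centers at time $1$ stays bounded below---equivalently, that not everything pops instantly in the dense limit. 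I expect the novel $0$--$1$ law announced in the introduction to intervene exactly here, either to deliver this saturation estimate directly or to bypass it by a scaling/symmetry argument on the stationary process $\{R_t/t\}_{t>0}$.
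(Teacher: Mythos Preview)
Your $0$--$1$ law in the first paragraph is correct: $L = \liminf_{t\to\infty} R_t/t$ is indeed an almost-sure constant $c_*$, and your intensity identity $\P(R_t \le \eps t) = c_d\,\eps^d\,\lambda_t\,t^d$ for $\eps<1$ is also correct. The gap is exactly where you place it, and it is not a technicality---the lower bound $\limsup_{t\to\infty} \lambda_t t^d > 0$ is essentially as hard as the theorem. Knowing only that $c_*$ is constant and that $c_* \le 2$ (from \cref{lem:liminf-upper-bound}) leaves every value in $[0,2]$ on the table. Your intensity route asks for surviving balloons to remain asymptotically dense at the scale of their own radii, which is already a recurrence statement; the scaling reformulation (saturation of the time-$1$ density as the Poisson rate diverges) is equivalent, not easier. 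Note also that your constancy-only $0$--$1$ law holds verbatim in the hyperbolic plane, where the conclusion $c_*=0$ is false---so by itself it cannot carry the Euclidean argument.

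The paper avoids intensity estimates altogether. The ``novel $0$--$1$ law'' you anticipate is \cref{T:limsup}: for any translation-invariant field $(X_\bn)_{\bn\in\Z^d}$ one has $\limsup_{\|\bn\|\to\infty} X_\bn/\|\bn\| \in \{0,\infty\}$ almost surely. Taking $X_\bn$ to be the maximal popping time among balloons centered in the unit cube at $\bn$ gives $\limsup_{t\to\infty} t/R_t \in \{0,\infty\}$---a genuine dichotomy, not mere constancy. Then \cref{lem:liminf-upper-bound} (giving $\liminf_t R_t/t \le 2$, hence $\limsup_t t/R_t \ge \tfrac12$) rules out the value $0$, forcing $\limsup_t t/R_t = \infty$, i.e.\ $c_* = 0$. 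The proof of \cref{T:limsup} rests on the Vitali covering lemma and Euclidean volume doubling: if some $\bn$ satisfies $X_\bn \ge \rho\|\bn\|$, then every $\bu$ within $X_\bn/\rho'$ of $\bn$ satisfies $X_\bn \ge \rho'\|\bn-\bu\|$, and volume comparison shows the density of such $\bu$ is at least $(\rho/5\rho')^d$ times the density for threshold $\rho$. This bootstraps any positive value of the $\limsup$ to $\infty$, and is precisely the mechanism that fails in $\HH$.
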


% We also consider the hyperbolic plane (with contant curvature $-1$).

\begin{thm}\label{thm:main2}
  The Poisson balloon process on the hyperbolic plane $\HH$ is transient.
  Moreover,
  \[ \liminf_{t \to \infty} \frac{R_t}{t}
    \ge \frac{\log 2}{\log\frac{1+\sqrt5}{2}} \approx 1.44 \text{ almost surely.} \]
\end{thm}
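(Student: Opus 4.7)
The plan is to establish the quantitative lower bound on $\liminf R_t/t$, which implies transience since $c_0 := \log 2/\log\phi > 1$ (here $\phi := (1+\sqrt 5)/2$). Fix any $c<c_0$. After reducing to a geometric sequence of times $t_n = (1+\eps)^n$ via a soft continuity argument (a perturbation of $t$ by $O(1)$ changes $R_t$ by at most $O(1)$), I use the Mecke--Campbell formula for Poisson processes to write
\[ \P(R_t \le ct) \;\le\; \E\#\bigl\{x\in[\Pi_t]\cap B(o,ct)\bigr\} \;=\; \int_{B(o,ct)} p_t(x)\,d\mu(x), \]
where $p_t(x)$ is the Palm probability that a Poisson point placed at $x$ survives to time $t$. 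Since $\mu(B(o,ct))\asymp e^{ct}$ in $\HH$, the task reduces to the Palm density bound $p_t(x)\le e^{-c_0 t+o(t)}$.

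The starting observation is that two balloons alive at time $t$ must be at distance strictly greater than $2t$ (else they would have popped already), so $[\Pi_t]$ is a $2t$-separated set. Volume packing alone gives $p_t(x)\lesssim e^{-t}$, which already implies transience and $\liminf R_t/t\ge 1$, but falls short of $c_0\approx 1.44$. The required improvement comes from the fact that $\Pi\mapsto\Pi_t$ is isometry-equivariant, so $[\Pi_t]$ is actually a factor-of-i.i.d.\ point process. I plan to invoke the density bound for well-separated factor-of-i.i.d.\ sets highlighted in the abstract. Although that bound is stated for regular trees, the idea is to transfer it to $\HH$ via a suitable tree model (a Fuchsian tessellation, or a coarse tree approximation along a family of geodesic rays), yielding the sharper bound $p_t(x)\le e^{-c_0 t+o(t)}$.

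The specific constant $c_0=\log 2/\log\phi$ should reflect a Fibonacci-type recursion on the pop-history of a surviving balloon. Each surviving $x$ has a distinguished killer $k(x)\in[\Pi_t]$ at distance $>2t$, which itself must survive up to time $\rho(x,k(x))/2$ against all other Poisson points inside that ball; iterating yields a ``chain of killers'' along which, via the triangle inequality combined with the survival constraint at each step, distances grow geometrically at rate at least $\phi$. Combined with a two-fold branching at each pop event, the balance between entropy and geometric growth produces the rate $\log 2/\log\phi$. This recursion is what distinguishes the argument from the naive packing bound, which ignores that each killer is itself constrained.

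Putting it together, $\E\#\{x\in[\Pi_t]\cap B(o,ct)\}\lesssim e^{(c-c_0)t}$, which is summable along $t_n$. Borel--Cantelli combined with taking $c\nearrow c_0$ along a countable set then gives the claimed a.s.\ bound on the $\liminf$. The main obstacle will be the transfer step: moving the regular-tree density bound to the hyperbolic plane while retaining the sharp constant $c_0$. This requires identifying a tree structure inside $\HH$ for which the factor-of-i.i.d.\ argument produces precisely the Fibonacci rate $\log 2/\log\phi$ rather than a weaker constant; calibrating the tree model so that the recursion matches the hyperbolic geometry is the delicate part of the proof.
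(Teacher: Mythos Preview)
Your architecture matches the paper's: bound the intensity $\lambda_t$ of $\Pi_t$ (equivalently your Palm density $p_t$), use Markov to get $\P(R_t\le s)\le \mu(B(o,s))\,\lambda_t\lesssim e^s\lambda_t$, and run Borel--Cantelli along integer times. You also correctly identify that the naive packing bound $\lambda_t\lesssim e^{-t}$ must be improved using that $[\Pi_t]$ is a factor of i.i.d., by transferring the tree bound of \cref{thm:tree-independent-set-bound} through a tessellation of~$\HH$. That is precisely the paper's route.

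Where you go astray is the origin of $c_0=\log 2/\log\phi$. The ``Fibonacci chain of killers'' heuristic does not work: if $x\in[\Pi_t]$ then its match $m(x)$ is also in $[\Pi_t]$ and pops simultaneously with $x$ at time $\rho(x,m(x))/2$, so the claim that $m(x)$ ``survives up to that time'' is tautological and no nontrivial recursion is generated; nothing in the triangle inequality forces distances along such a chain to grow by the factor~$\phi$. In the paper the constant is purely geometric. One embeds $\T_3$ in $\HH$ via the ideal-triangle tessellation and projects $[\Pi_t]$ (restricted to a bounded core of each triangle) to the tree vertices. The key computation (\cref{C:hyperbolic-tree-dist}) is that the midpoints of two adjacent sides of an ideal triangle are at hyperbolic distance $2\log\phi$, whence the projection has Lipschitz constant $a=2\log\phi$ and the projected process is $(2t/a)$-separated in $\T_3$. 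It is moreover a $\Gamma$-factor of i.i.d.\ for the triangle group~$\Gamma$ (this is why \cref{thm:tree-independent-set-bound} is formulated for an arbitrary transitive subgroup, not the full automorphism group). The tree bound then gives $\lambda_t\lesssim t\cdot 2^{-2t/a}=t\,e^{-t\log 2/\log\phi}$: the $\log 2$ is $\log(d-1)$ for $d=3$, and the $\log\phi$ comes from ideal-triangle geometry, not from any recursion in the balloon dynamics. Your ``calibrating the tree model'' is exactly this step, but you should abandon the Fibonacci picture and compute the distortion constant directly.
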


In \cref{thm:main}, the metric $\rho$ is the standard Euclidean metric and the measure $\mu$ is Lebesgue measure (the scaling, or equivalently the intensity of the Poisson point process, is immaterial for the question). In \cref{thm:main2}, the metric $\rho$ is the standard hyperbolic metric with any constant curvature and $\mu$ is the corresponding measure (here a scaling of the measure would be equivalent to a change of curvature). We mention that, unlike in the Euclidean space, in the hyperbolic plane it is not a priori clear that recurrence/transience do not depend on the curvature/intensity of the Poisson point process.

%\note{We also know that $\liminf_{t \to \infty} R_t/t \le 2$. We might be able to show $\liminf_{t \to \infty} R_t/t = 2$.}

We also consider the balloon process on trees.
Let $\cT_d$ denote the infinite $d$-regular real tree with unit edge lengths,
that is, the $d$-regular tree in which each edge is a copy of the unit interval $[0,1]$ (endowed with the Lebesgue measure and standard metric),
and endpoints are identified at each vertex. This naturally defines a metric measure space.
Fix an arbitrary point $o$ to be the origin (not necessarily one of the vertices).

\begin{thm}\label{thm:main3}
  For any $d\geq 3$, the Poisson balloon process on $\cT_d$ is transient.
  Moreover, almost surely, $R_t \ge 2t - O(\log t)$ as $t \to \infty$, and
  \[ \liminf_{t \to \infty} \frac{R_t}{t} = 2  \text{ almost surely}. \]
  % $\limsup_{t \to \infty} \frac{2t-R_t}{\log t} < \frac{2.1}{\log(d-1)} < \infty$.
\end{thm}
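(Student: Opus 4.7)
The plan is to reduce the theorem to two matching one-sided statements: the quantitative lower bound $R_t \ge 2t - O(\log t)$ almost surely (which gives both transience and $\liminf R_t/t \ge 2$), and the matching $\liminf R_t/t \le 2$ almost surely. The structural starting point is the observation that the set $[\Pi_t]$ of still-active balloon centers at time $t$ is automatically $2t$-separated: if two active centers $x,y$ had $\rho(x,y) \le 2t$, their balloons would meet at time $\rho(x,y)/2 \le t$ and pop. Since $[\Pi_t]$ is a deterministic measurable function of $\Pi$, it is moreover an equivariant factor of an i.i.d.\ process on $\cT_d$.

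For the lower bound, I would invoke the upper bound on the density of well-separated factor sets on $\cT_d$ promised in the abstract. Applied with separation $2t$, this yields an intensity for $[\Pi_t]$ of at most $p(t)\cdot(d-1)^{-2t}$ for some polynomial $p$. Since $|B(o,r)| \asymp (d-1)^r$, the expected number of active centers in $B(o, 2t - C\log t)$ is of order $p(t)\cdot t^{-C\log(d-1)}$, which is summable over integer $t$ for $C$ large enough. A Borel--Cantelli argument, together with the monotonicity of $[\Pi_t]$ in $t$ between pop events, then gives $R_t \ge 2t - O(\log t)$ for all sufficiently large $t$ almost surely; this already delivers transience.

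For the matching upper bound, the plan is to establish a lower bound on the intensity of $[\Pi_t]$ of order $c(d-1)^{-2t}$. I would construct around a typical Poisson point $x$ an explicit \emph{insulating} configuration of other Poisson points in $B(x,2t)$, arranged so that they pair up and annihilate each other before any balloon can reach $x$. Removing $x$ splits $\cT_d$ into $d$ independent subtrees, so such a configuration can be built inductively, branch by branch; the probability that a given Poisson point admits one is naturally of order $(d-1)^{-2t}$, matching the volume factor in a ball of radius $2t$. Summing over Poisson points in $B(o, 2t + C\log t)$ makes the expected number of survivors diverge, and a second-moment argument (using approximate independence of survival events for well-separated points) gives $R_t \le 2t + O(\log t)$ infinitely often, hence $\liminf R_t/t \le 2$. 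I expect the main obstacle to be this survival estimate: designing the insulating configuration so that it provably keeps $x$'s balloon alive until time $t$ under the cascading balloon dynamics, while retaining probability of the right order $(d-1)^{-2t}$ and not merely a much smaller one.
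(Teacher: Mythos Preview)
Your lower-bound strategy is essentially the paper's: project the active centers to the discrete tree, use that $[\Pi_t]$ (hence its projection) is $\approx 2t$-separated and an $\text{Aut}(\T_d)$-factor of i.i.d., apply the density bound $\alpha_{d,t}\le 2t\log(d-1)\,(d-1)^{-t}$ at separation $2t$, and finish with Borel--Cantelli plus the monotonicity of $R_t$. One technical point you gloss over is that the density bound is proved for the \emph{discrete} tree $\T_d$, so the paper first passes from $\cT_d$ to $\T_d$ by a nearest-vertex projection $\pi$ (losing a constant in the separation), then applies the bound there; this is routine but necessary.

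Your upper-bound plan is genuinely different from the paper's, and considerably harder than what is needed. The paper does \emph{not} prove an intensity lower bound for $[\Pi_t]$, build insulating configurations, or run a second-moment argument. Instead it invokes the general bound $\liminf_{t\to\infty} R_t/t \le 2$ of \cref{lem:liminf-upper-bound}, which comes from the stable-matching interpretation: by insertion/deletion tolerance of the Poisson process (as in Holroyd--Peres), there are almost surely infinitely many points $x\in[\Pi]$ with $\rho(x,m(x))\ge \rho(x,o)-r$, which translates directly into $R_t\le 2t+r$ along an unbounded set of times. This soft argument works uniformly over all the spaces in the paper and requires no quantitative input about the tree. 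Your constructive route may well be workable, but the obstacle you identify is real: keeping a balloon alive to time $t$ by an explicit local configuration while retaining probability of order $(d-1)^{-2t}$ (rather than the exponentially-in-$(d-1)^{2t}$ smaller probability of an empty ball) needs a careful inductive design and a genuine second-moment computation, none of which the paper undertakes.
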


A general argument due to \cite{holroyd2009poisson} yields an upper bound of 2 for $\liminf_{t \to \infty} R_t/t$ for reasonable spaces, including those considered above (see \cref{lem:liminf-upper-bound}).
\cref{thm:main} shows that this quantity is in fact zero for the Euclidean space, and \cref{thm:main3} shows that it is exactly equal to 2 for regular trees.
It is however unclear if for the hyperbolic plane this quantity is actually 2 or not.

Towards proving \cref{thm:main}, we came across the following fact.
While it seems like a classic result, it appears not to be well known,
and as far as we can tell it is new for $d>1$.
In the one-dimensional case it has been proved by Tanny \cite{tanny1974zero}.
Their proof uses a connection to branching processes in random environment and  a criterion for their survival from \cite{tannythesis}.
Our proof has the advantages of being more direct and applying in any dimension.

\begin{thm}\label{T:limsup}
  Let $(X_\bn)_{\bn \in \Z^d}$ be a translation-invariant process.
  Then almost surely
  \[ \limsup_{\|\bn\| \to \infty} \frac{X_\bn}{\|\bn\|} \in \{0,+\infty\}. \]
\end{thm}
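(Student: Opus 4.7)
My plan begins with a reduction to the ergodic case. The random variable $L := \limsup_{\|\bn\|\to\infty} X_\bn/\|\bn\|$ is shift-invariant as a function of the process, since $\|\bn-\bu\|/\|\bn\|\to 1$ as $\|\bn\|\to\infty$ for any fixed $\bu$, and so $L(T_\bu X) = L(X)$ almost surely. By the ergodic decomposition, it suffices to prove the theorem when $X$ is ergodic under $\Z^d$-translations, in which case $L$ is almost surely equal to some constant $c \in [-\infty,+\infty]$. A short argument using ergodicity shows $c \geq 0$: since $X_0$ is a.s.\ finite, there exists $M$ with $\P(X_0 > -M)>0$; ergodicity then gives positive density of $\bn$ with $X_\bn > -M$, whence $\limsup X_\bn/\|\bn\|\geq 0$. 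It remains to exclude $c \in (0,\infty)$.

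Assume $c \in (0,\infty)$ for contradiction. I split according to the tail of $X_0$. In the light-tail case $\E[(X_0)_+^d] < \infty$, translation invariance gives $\P(X_\bn > \eps\|\bn\|) = \P(X_0 > \eps\|\bn\|)$, and summing over $\bn\in\Z^d$ yields $\sum_{\bn} \P(X_\bn > \eps\|\bn\|) \leq C_d\, \E[(X_0)_+^d]/\eps^d < \infty$ for every $\eps>0$. Borel--Cantelli then implies that almost surely only finitely many $\bn$ satisfy $X_\bn > \eps\|\bn\|$, forcing $L \leq \eps$; sending $\eps\to 0$ gives $L=0$, contradicting $c>0$.

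The heavy-tail case $\E[(X_0)_+^d] = \infty$ is the main case. My plan is to exploit the upper-bound half of $L = c$ from many translated origins simultaneously. For each $\bu\in\Z^d$, define
\[
  N(\bu) \;:=\; \inf\bigl\{N : X_\bn \leq (c+\eps)\|\bn-\bu\|\ \text{for all }\|\bn-\bu\|\geq N\bigr\}.
\]
Translation invariance applied to $L(T_\bu X) = c$ gives $N(\bu) < \infty$ almost surely for each fixed $\bu$, hence simultaneously for all $\bu\in\Z^d$ by countability; moreover $(N(\bu))_{\bu\in\Z^d}$ is a stationary field with the same marginal distribution as $N(0)$. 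Picking $K$ so large that $\P(N(0)\leq K) \geq 1-\delta$ with $\delta$ small, ergodicity gives that the set of ``good'' shifts $\{\bu : N(\bu)\leq K\}$ has density at least $1-\delta$ in $\Z^d$. The key claim I would aim for is that, almost surely, for every $\bn$ one can find a good $\bu$ at distance roughly $K$ from $\bn$, yielding $X_\bn \leq (c+\eps)\|\bn-\bu\| = O(K)$. Since $K$ does not depend on $\bn$, this uniform bound would be incompatible with the requirement, imposed by $L = c > 0$, that $X_\bn > (c-\eps)\|\bn\|$ for $\bn$ of arbitrarily large norm.

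The hard part will be verifying that key claim rigorously. Without any mixing or independence hypothesis on $X$, the stationary field $(N(\bu))_{\bu}$ may be heavily correlated, so a naive union bound over shells around $\bn$ fails and the events ``no good $\bu$ near $\bn$'' are themselves positively correlated with the event ``$X_\bn$ large'' (a large $X_\bn$ can in fact force $N(\bu)$ to be large for nearby $\bu$). I expect this obstacle to require either a careful second-moment estimate on $(N(\bu))_{\bu}$ using the ergodic theorem at multiple scales, or an alternative covering/coupling argument exploiting the geometry of $\Z^d$ together with the ergodicity of $X$.
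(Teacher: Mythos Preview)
Your reduction to the ergodic case and the light-tail Borel--Cantelli step are fine, but the heavy-tail argument has a genuine gap that is not merely hard but \emph{circular}. You note yourself that a large $X_\bn$ forces $N(\bu)$ to be large for nearby $\bu$; in fact this makes your key claim outright false, not just delicate. If $X_\bn > (c+\eps)M$ for some $M>K$, then every $\bu$ with $K \le \|\bn-\bu\| \le M$ satisfies $X_\bn > (c+\eps)\|\bn-\bu\|$ at a point with $\|\bn-\bu\|\ge K$, hence $N(\bu)>\|\bn-\bu\|\ge K$ and $\bu$ is not good. Thus the distance $D(\bn)$ from $\bn$ to the nearest good $\bu$ outside $B(\bn,K)$ always obeys $D(\bn)\ge X_\bn/(c+\eps)$, and the bound you are aiming for, $X_\bn \le (c+\eps)D(\bn)$, collapses to a tautology. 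No second-moment or covering refinement on the field $(N(\bu))$ can repair this, because the obstruction is pointwise and deterministic, not probabilistic. More tellingly, your target conclusion ``$X_\bn=O(K)$ for all $\bn$'' says $X$ is a.s.\ bounded, which fails already for i.i.d.\ unbounded $X_\bn$ with $\E(X_0)_+^d<\infty$ (where $L=0$); since your heavy-tail argument never actually uses $L>0$, it cannot distinguish that case and so cannot be correct.

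The paper's argument runs in the opposite direction: rather than exploiting the \emph{upper} bound $L<c+\eps$ seen from many centers, it exploits the \emph{lower} bound. For $\rho<L$ the set $S_{a,b}(\rho)=\{\bu:\exists\,\bn,\ a\le X_\bn\le b,\ X_\bn>\rho\|\bn-\bu\|\}$ has density $1$ (after sending $b\to\infty$); it is a union of balls $B(\bn,X_\bn/\rho)$, and the Vitali covering lemma extracts a disjoint subfamily whose $5$-fold blowups still cover $S$. Shrinking those balls by a factor $\rho/\rho'$ shows the analogous set at level $\rho'$ has density at least $(\rho/5\rho')^d>0$. Letting $b\to\infty$ and then $a\to\infty$ gives $\P(L\ge\rho')\ge (L/5\rho')^d>0$ for every $\rho'$, and constancy of $L$ forces $L=\infty$. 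The covering lemma is what lets one transfer positive density from scale $\rho$ to scale $\rho'$; this is the missing idea in your approach.
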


Note that the choice of norm is not important.
If the variables $X_{\bn}$ are i.i.d., then a simple application of Borel--Cantelli yields that the $\limsup$ is $\infty$ if and only if $\sum n^{d-1} \P(X \ge n) = \infty$,
or equivalently, $\E (X^+)^d = \infty$.
If instead of translation invariance we only assume that all $X_\bn$ have the same distribution, 
then it is not hard to construct examples where the $\limsup$ above takes non-trivial values.
We remark also that the same result holds if $X$ is some random field indexed by $\R^d$, or by the points of some translation-invariant point process on $\R^d$,
since one may apply the theorem to a new process $X'_\bn$, defined to be the maximum of $X$ on a unit box around $\bn\in\Z^d$.

Our proof of \cref{thm:main2} relies on approximating the hyperbolic plane by a $3$-regular tree and thus relies on the arguments used for regular trees. In turn, for the proof of \cref{thm:main3}, we rely on a new upper bound of the density of well-separated sets which are factors of i.i.d.\ processes on regular trees (see \cref{thm:tree-independent-set-bound}).
This is related to previous works on independence ratio due to Bollob\'{a}s \cite{bollobas1981independence}, McKay \cite{mckay1987}, and Rahman and Virag \cite{rahman2017local}.

\paragraph{Acknowledgement:}

We would like to thank Itai Benjamini for suggesting this problem.
We are grateful to Thomas Budzinski for an elegant proof of \cref{T:limsup}, and to Ofer Zeitouni for bringing \cite{tanny1974zero} to our attention.

%%%%%%%%%%%%%%%%%%%%%%%%%%%%%%%%%%%%%%%%%%%%%%%%%%%%%%%%%%%%%%%%%%
\section{Balloon basics}\label{sec:basic}

In this section, we explore basic properties of the Poisson balloon process, including the fact that it is well defined.
The balloon process associated to a point process $\Pi$ on a metric space $(\Omega,\rho)$ is closely related to the so-called stable matching between the points of $[\Pi]$.
Those have been studied in depth in $\R^d$ in \cite{haggstrom1996nearest,holroyd2009poisson}.
Some of the relevant arguments extend beyond $\R^d$ to our settings.
We give a brief sketch of them here and refer the reader to those papers for details.

Consider a locally finite set of points $U \subset \Omega$.
A \textbf{partial matching} of $U$ is a collection of disjoint unordered pairs $m = \{\{u_i,v_i\}\}_{i\in I}$ of elements of $U$.
An element $u\in U$ is \textbf{matched} to $v \in U$ if $\{u,v\}$ is one of the pairs;
in this case we write $m(u):=v$.
A \textbf{matching} is a partial matching in which every vertex is matched.
A partial matching $m$ is \textbf{stable} if there do not exist distinct points $u,v \in U$ such that $\rho(u,v) < \min\{ \rho(u,m(u)),\rho(v,m(v))\}$, where $\rho(x,m(x))$ is defined to be $\infty$ when $x$ is unmatched.
The set $U$ is \textbf{non-equidistant} if there do not exist $u,v,u',v' \in U$ with $\{u,v\} \neq \{u',v'\}$ and $\rho(u,v)=\rho(u',v')$.
A \textbf{descending chain} in $U$ is an \emph{infinite} sequence $x_0,x_1,\dots \in U$ such that $\rho(x_n,x_{n+1})$ is strictly decreasing.

The following is shown in \cite[Lemma 15]{holroyd2009poisson} (the statement there is only in $\R^d$, but the arguments apply in general metric spaces).
If $U$ is discrete, non-equidistant and has no descending chains, then it has a unique stable partial matching.
Furthermore, this partial matching contains at most one unmatched point, and it can be produced by the \emph{greedy} or \emph{iterated mutually closest matching} algorithm:
Match all mutually closest pairs, remove these points and repeat indefinitely.

\begin{lemma}\label{lem:balloon-matching}
  Suppose that $U$ is discrete, non-equidistant and has no descending chains.
  Let $m$ be the unique stable partial matching of $U$. Define
  \[ U_t := \{x\in U : \rho(x,m(x)) > 2t\}, \]
  where as before $\rho(x,m(x))=\infty$ for unmatched points.
  Then $U_t$ is precisely the set of active balloon centers at time $t$ for the balloon process started from $U$.
\end{lemma}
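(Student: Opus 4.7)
The plan is to verify directly that the set $U_t$ defined via $m$ satisfies exactly the dynamical rules of the balloon process. Three conditions need checking: (a) for each $t$, the closed balls of radius $t$ centered at points of $U_t$ are pairwise disjoint, so no active balloons have collided; (b) every active balloon in $U_t$ has never been stopped by a balloon that already popped; and (c) the departures from $U_t$ happen at the correct times, namely a matched pair $\{u,m(u)\}$ exits $U_t$ exactly at $t=\rho(u,m(u))/2$, the instant their balloons first touch.

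For (a), I would take distinct $x,y\in U_t$, so that $\rho(x,m(x))>2t$ and $\rho(y,m(y))>2t$. Stability of $m$, written in the contrapositive form $\rho(x,y)\ge \min\{\rho(x,m(x)),\rho(y,m(y))\}$ for all distinct $x,y\in U$, immediately yields $\rho(x,y)>2t$, as required. For (b), take $x\in U_t$ and $y\in U\setminus U_t$; the latter forces $y$ to be matched with $\rho(y,m(y))\le 2t<\rho(x,m(x))$, so stability gives $\rho(x,y)\ge \rho(y,m(y))$. Moreover, $y$'s partner cannot be $x$, since otherwise $\rho(x,m(x))=\rho(y,m(y))\le 2t$ would contradict $x\in U_t$, so $\{x,y\}\ne \{y,m(y)\}$ and non-equidistance promotes the inequality to $\rho(x,y)>\rho(y,m(y))$. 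Therefore balloon $y$ popped at time $\rho(y,m(y))/2<\rho(x,y)/2$, strictly before $x$'s balloon could have reached it. Finally (c) is immediate: $U_0=U$ since $\rho(x,m(x))>0$ for every $x$; each matched pair leaves $U_t$ exactly at $t=\rho(u,m(u))/2$; and by non-equidistance no two such departures coincide, so there is no ambiguity in reading off a well-defined evolution.

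I expect the main subtlety to be the promotion of the weak stability inequality to a strict one in step (b), which is the one place the non-equidistance hypothesis is used in an essential way; without it, a would-be collision between an active and a popped balloon could occur exactly at the popping instant, breaking the consistency of the description. By contrast, the no-descending-chain assumption is not invoked in the verification itself; it enters only through the cited result from \cite{holroyd2009poisson} to guarantee the existence and uniqueness of the stable partial matching $m$, after which the argument above is essentially bookkeeping.
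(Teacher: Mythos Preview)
Your proof is correct and takes essentially the same approach as the paper's: verify disjointness of active balloons via stability (your (a), their first check) and that departures occur precisely when a matched pair touches (your (c), their second check). Your additional step (b), with its careful use of non-equidistance to rule out an active balloon touching one that is simultaneously popping with its own partner, makes explicit a consistency check the paper's terser argument leaves to the reader.
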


\begin{proof}
  We need to check two things:
  that the balls of radius $t$ around points of $U_t$ are disjoint and that they are only removed when touching another balloon.
  Suppose $u,v\in U_t$ have intersecting balloons, so that $\rho(u,v)\le 2t$. However, $\rho(u,m(u))>2t$ since $u\in U_t$, and similarly for $\rho(v,m(v))$.
  Thus such $u,v$ violate the stability of $m$.

  Secondly, suppose $u\in U_s$ for all $s<t$ but $u\not\in U_t$.
  This implies that $\rho(u,m(u))=2t$, and so the balloon around $u$ touches the balloon around $m(u)$ at time $t$.
\end{proof}

The balloon process associated to a point process $\Pi$ is therefore well defined if, almost surely, $[\Pi]$ is discrete, non-equidistant and has no descending chains.
It is not hard to show that this is the case when $\Pi$ is a Poisson point process in a ``nice'' metric measure space $(\Omega,\rho,\mu)$.
These include all spaces we consider in this paper.
For future reference, we give some sufficient conditions.
It is straightforward that $[\Pi]$ is almost surely discrete whenever $\mu$ is non-atomic and locally finite and $(\Omega,\rho)$ is a separable metric space.
The other two properties are addressed in the following propositions.
For $x \in \Omega$ and $R \ge r \ge 0$, denote $A(x,r,R) := \{y \in \Omega :r\le \rho(x,y)\le R\}$.

\begin{prop}\label{prop:non_equidistant}
  Let $(\Omega, \rho, \mu)$ be a metric measure space satisfying
  \begin{itemize}[nosep]
  \item $(\Omega,\mu)$ is a $\sigma$-finite measure space.
  \item $(\Omega,\rho)$ is a separable metric space.
  \item Very thin annuli: For all $r \ge 0$ and $x \in \Omega$ we have $\mu(A(x,r,r)) = 0$.
  \end{itemize}
  Then the Poisson point process $\Pi$ on $\Omega$ with intensity measure $\mu$ is almost surely non-equidistant.
\end{prop}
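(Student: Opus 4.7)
The plan is to reduce non-equidistance to the non-occurrence of two specific configurations, and kill each by a first-moment computation. An equidistant configuration means two unordered pairs $\{u,v\}\ne\{u',v'\}$ in $[\Pi]$ with $\rho(u,v)=\rho(u',v')$; since the pairs are distinct they share either zero or one element, yielding either (a) three distinct points $u,v,v'\in[\Pi]$ with $\rho(u,v)=\rho(u,v')$, or (b) four distinct points $u,v,u',v'\in[\Pi]$ with $\rho(u,v)=\rho(u',v')$. It is enough to show that almost surely neither configuration exists.

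First I would exploit $\sigma$-finiteness to write $\Omega=\bigsqcup_n \Omega_n$ with $\mu(\Omega_n)<\infty$, and by countable subadditivity reduce to bounding, for each fixed tuple of indices $n_1,\dots,n_k$, the probability that a configuration exists with the $i$-th point lying in $\Omega_{n_i}$. By Markov this probability is at most the expected number of such configurations in $[\Pi]$; by the factorial moment formula for the Poisson process (equivalently, iterated Slivnyak--Mecke), this expectation equals
\[ \int_{\Omega_{n_1}\times\cdots\times\Omega_{n_k}} \mathbb{1}\{\text{the equidistance constraint holds}\}\, d\mu^k, \]
for $k\in\{3,4\}$. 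Integrating last over the variable $v'$ (which appears only on one side of the distance equation) with the other coordinates fixed, the innermost integral is bounded by $\mu(A(w,r,r))$, where $w=u$ in case (a) and $w=u'$ in case (b), and $r$ is the prescribed distance. By the very-thin-annuli hypothesis this is zero, so each term vanishes; summing over the countable family of index tuples completes the argument.

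The main difficulty is bookkeeping rather than genuine mathematics. I need the configuration events to be measurable, which follows from separability of $(\Omega,\rho)$ (so that $\rho$ is jointly Borel on $\Omega\times\Omega$) together with the standard measurable structure on point configurations. I also need $[\Pi]$ to be almost surely simple so that ``distinct points in $[\Pi]$'' is well behaved; but this is automatic, since the thin-annuli hypothesis at $r=0$ forces $\mu(\{x\})=0$ for every $x\in\Omega$, hence $\mu$ is non-atomic, and a Poisson process with a non-atomic $\sigma$-finite intensity is simple. Everything else is a routine application of the Campbell/Mecke formula.
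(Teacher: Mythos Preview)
Your proof is correct. It differs from the paper's in its decomposition and in the Poisson tool it invokes.

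The paper does not split into the 3-point and 4-point cases. Instead it fixes a measurable set $S$, defines the event $\cE(S)$ that some $w\in[\Pi]\cap S$ and $x,y,z\in[\Pi]\setminus S$ satisfy $\rho(w,x)=\rho(y,z)$, and shows $\P(\cE(S))=0$ by \emph{conditioning} on $[\Pi]\setminus S$: given the outside process there are only countably many candidate pairs $(x,r)$, and the inside process must then hit the $\mu$-null set $S\cap A(x,r,r)$. Separability is used to produce a countable family of balls $\{S_n\}$ with the property that any equidistant configuration forces some $\cE(S_n)$; the point is that one can always isolate a single point of the configuration from the remaining two or three by such a ball.

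Your route replaces the conditioning/separating-ball argument by a direct first-moment computation via the factorial-moment (Slivnyak--Mecke) identity, with $\sigma$-finiteness entering through the partition $\Omega=\bigsqcup_n\Omega_n$ and separability entering only to secure measurability. The paper's argument is slightly more elementary in that it uses only spatial independence of the Poisson process rather than the Mecke formula; on the other hand, your approach is more systematic, treats the two cases uniformly, and would transfer verbatim to any simple point process whose factorial moment measures are absolutely continuous with respect to $\mu^{\otimes k}$.
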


\begin{proof}
Let $S \subset \Omega$ be a measurable set, and let $\cE(S)$ be the event that there exist $w,x,y,z \in [\Pi]$ with $w \in S$, $x,y,z \notin S$ and $\rho(w,x)=\rho(y,z)$. Let us show that $\cE(S)$ has probability zero. Indeed, $\cE(S)$ is precisely the event that $[\Pi]$ has a point in $S \cap A(x,r,r)$ for some $x \in [\Pi]\setminus S$ and $r \in \{ \rho(y,z) : y,z \in [\Pi]\setminus S\}$. Conditioning on $[\Pi] \setminus S$, the very thin annuli assumption implies that this has probability zero for any fixed $x$ and $r$. Since there are countably many relevant $x$ and $r$, the conditional (and hence also unconditional) probability of $\cE(S)$ is zero.

Now let $U \subset \Omega$ be a countable dense set, and consider the countable collection $\{S_n\}_n$ of all balls with rational radius and centers in $U$. Then, almost surely, no event $\cE(S_n)$ occurs. Since the existence of equidistant points is easily seen to imply the occurrence of some $\cE(S_n)$, we conclude that $[\Pi]$ is almost surely non-equidistant.
\end{proof}

\begin{prop}\label{prop:descending_chain}
  Let $(\Omega, \rho, \mu)$ be a metric measure space satisfying 
  \begin{itemize}[nosep]
  \item $(\Omega,\mu)$ is a $\sigma$-finite measure space.
  \item Uniformly thin annuli: For all $r\ge 0$ there exists a $\delta >0$ such that 
    \[ \sup_{x \in \Omega} \mu(A(x,r,r+\delta)) \le 1.\]
  \end{itemize}
  Then the Poisson point process $\Pi$ on $\Omega$ with intensity measure $\mu$ almost surely has no descending chains.
\end{prop}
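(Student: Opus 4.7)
The plan is to bound the expected number of length-$k$ descending chains starting in a bounded region and to show that this expectation tends to zero as $k \to \infty$, so that a Borel--Cantelli argument rules out an infinite descending chain.

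By $\sigma$-finiteness, write $\Omega = \bigcup_n \Omega_n$ with $\mu(\Omega_n) < \infty$. If an infinite descending chain $(x_n)_{n \ge 0}$ exists, then after a finite shift of indices $x_0$ lies in some $\Omega_n$ and the first step $d_0 := \rho(x_0, x_1)$ is at most some rational $r > 0$. Hence by countable union it is enough to show that for each measurable $K \subseteq \Omega$ with $\mu(K) < \infty$ and each $r > 0$, almost surely no descending chain in $[\Pi]$ has $x_0 \in K$ and $d_0 \le r$. A preliminary step upgrades the uniformly thin annuli assumption to uniform local finiteness of $\mu$ on balls: iterating $R_0 = 0$, $R_{j+1} = R_j + \delta(R_j)$ gives $\mu(B(x, R_n)) \le n$ uniformly in $x$, and any $R$ is reached in finitely many steps, so $M(R) := \sup_{x \in \Omega} \mu(B(x, R)) < \infty$.

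The main estimate comes from the multivariate Slivnyak--Mecke formula. Let $N_k$ count ordered $(k+1)$-tuples of distinct points $(x_0, \dots, x_k) \in [\Pi]^{k+1}$ with $x_0 \in K$, $\rho(x_0, x_1) \le r$, and $\rho(x_i, x_{i+1}) < \rho(x_{i-1}, x_i)$ for $1 \le i \le k-1$. Existence of a descending chain with $x_0 \in K$ and $d_0 \le r$ implies $N_k \ge 1$ for every $k$, and the events $\{N_k \ge 1\}$ decrease in $k$. By Mecke,
\[
\E[N_k] = \int_K \int_{B(x_0, r)} \int_{B(x_1, d_0)} \cdots \int_{B(x_{k-1}, d_{k-2})} d\mu(x_k) \cdots d\mu(x_0),
\]
where $d_i = \rho(x_i, x_{i+1})$ and the strict decrease is encoded by the nested ball radii. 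The target bound is $\E[N_k] \le \mu(K) M(r)^k / k!$; combined with Markov, this gives $\P(N_k \ge 1) \to 0$, hence almost surely $N_k = 0$ for some (therefore all larger) $k$, contradicting the existence of a chain. A countable union over $K$ and $r$ then concludes.

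The main obstacle is turning the factorial heuristic into a rigorous bound using only the uniformly thin annuli assumption. The heuristic is that among the $k!$ orderings of $k$ distinct distances in $[0, r]$ exactly one is strictly decreasing, so the admissible simplex has volume $r^k / k!$ inside $[0, r]^k$. In the symmetric spaces treated in this paper, $\mu$ admits an explicit radial decomposition $d\mu = \phi(r)\, dr\, d\sigma$, which reduces the Mecke integral to a direct polar-coordinate computation yielding the $1/k!$ factor. In the general setting I would pass to the pushforward radial measures $\nu_x := \rho(x, \cdot)_\ast (\mu\big|_{B(x, r)})$, which satisfy $\sup_x \nu_x([s, s+\delta(s)]) \le 1$ by the uniformly thin annuli hypothesis; a stochastic-domination argument replaces the family $\{\nu_x\}$ by a single absolutely continuous dominating radial distribution, for which the simplex computation is explicit and the $1/k!$ decay follows.
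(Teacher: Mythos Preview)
Your approach via the Slivnyak--Mecke formula and a factorial bound on $\E[N_k]$ is genuinely different from the paper's, and it has a real gap precisely at the step you flag as the ``main obstacle.''

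The stochastic-domination step does not work as described. Writing $F_x(t)=\mu(B(x,t))$, you have $F_x(t)\le M(t)$ for all $x,t$, and the recursion for $\bar J_k(r):=\sup_{x_0}J_k(x_0,r)$ reads
\[
  \bar J_k(r)\;\le\;\sup_{x_0}\int_0^r \bar J_{k-1}(s)\,dF_{x_0}(s),
\]
with $\bar J_{k-1}$ increasing. But $F_x\le M$ does \emph{not} imply $\int_0^r g\,dF_x\le\int_0^r g\,dM$ for increasing $g$: take for instance $F_x(t)=t$ on $[0,1]$ and $M(t)=\min(2t,1)$, with $g=\mathbf 1_{[1/2,1]}$; then $\int g\,dF_x=\tfrac12>0=\int g\,dM$. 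More generally, no single ``dominating radial distribution'' built from the family $\{\nu_x\}$ via the bound $\nu_x([s,s+\delta(s)])\le 1$ gives the measure-wise comparison you need to push the $1/k!$ simplex computation through. Your radial-decomposition argument is fine in the symmetric spaces the paper actually uses, so the applications survive; it is the general proposition that remains unproved. A smaller issue: the iteration $R_{j+1}=R_j+\delta(R_j)$ is not shown to reach every $R$, so the claim $M(R)<\infty$ also needs more care.

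The paper avoids the factorial bound entirely. Since a descending chain has $\rho(x_n,x_{n+1})$ convergent, its tail eventually lies in a single thin annulus $[r,r+\delta(r)]$. Fixing such an $r$ and a finite-measure starting set $S$, the paper lets $U_0=[\Pi]\cap S$ and $U_{n+1}=[\Pi]\cap A(U_n,r,r+\delta)\setminus(U_0\cup\cdots\cup U_n)$; then $(|U_n|)$ is dominated by a branching process with Poisson mean-$1$ offspring (because $\sup_x\mu(A(x,r,r+\delta))\le 1$), which is critical and dies out almost surely. This uses the thin-annulus bound once per generation rather than integrating over all radii at once, and needs no radial regularity or bound on $M(R)$.
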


\begin{proof}
  It suffices to show that there almost surely does not exist a sequence of distinct $x_0,x_1,\ldots \in [\Pi]$ such that $\rho(x_n,x_{n+1})$ converges.
  In turn, it suffices to show that for any $r \ge 0$ there exists $\delta>0$ such that there almost surely does not exist a sequence $x_0,x_1,\ldots \in [\Pi]$ such that $\rho(x_n,x_{n+1}) \in [r,r+\delta]$ for all $n$. By the thin annuli assumption, it suffices to show that the latter holds whenever $r$ and $\delta$ are such that $\mu(A(x,r,r+\delta)) \le 1$ for all $x \in \Omega$.

Fix a set $S \subset \Omega$ with $\mu(S)<\infty$ and consider the process $(U_n)_{n \ge 0}$ defined by $U_0 := [\Pi] \cap S$ and $U_{n+1} := [\Pi] \cap A(U_n,r,r+\delta) \setminus (U_0 \cup \cdots \cup U_n)$, where we write $A(U,r,R) := \bigcup_{x \in U} A(x,r,R)$ for a set $U$.
 Observe that if there exists a sequence $x_0,x_1,\ldots \in [\Pi]$ such that $x_0 \in S$ and $\rho(x_n,x_{n+1}) \in [r,r+\delta]$ for all $n$, then there also exists a sequence $y_0,y_1,\ldots$ such that $y_n \in U_n$ for all $n$. However, $(|U_n|)_{n \ge 0}$ is stochastically dominated by the progeny sequence of a branching process whose offspring distribution is Poisson with mean $1$, which almost surely goes extinct, so that the latter occurs with probability zero. Since $\mu$ is $\sigma$-finite, this shows that $[\Pi]$ almost surely has no descending chains.
\end{proof}

We end this section with a property of the Poisson balloon process which follows from the methods of~\cite{holroyd2009poisson},
where (for $\R^d$) it is basically the content of equation (19) there.

\begin{lemma}\label{lem:liminf-upper-bound}
Let $(\Omega, \rho, \mu)$ be a metric measure space satisfying the assumptions of \cref{prop:non_equidistant,prop:descending_chain} and having $\mu(B(x,r)) <\infty$ for all $x \in \Omega$ and $r>0$. Further suppose that the Poisson point process $\Pi$ with intensity measure $\mu$ on $\Omega$ is ergodic under the action of a group of measure-preserving isometries of $\Omega$. Then the Poisson balloon process satisfies 
  \begin{equation}
    \liminf_{t \to \infty} \frac{R_t}t \le 2 \qquad\text{almost surely.} \label{eq:upper_limsup}
  \end{equation}
\end{lemma}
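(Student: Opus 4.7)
The plan is to exhibit, almost surely, a sequence $(x_n) \subset [\Pi]$ with matching distance $d(x_n) := \rho(x_n,m(x_n)) \to \infty$ and $\rho(o,x_n) \le d(x_n)$, where $m$ is the stable partial matching of $[\Pi]$ (well-defined a.s.\ by \cref{prop:non_equidistant,prop:descending_chain}). Setting $t_n := d(x_n)/2 - 1/n$, \cref{lem:balloon-matching} gives $x_n \in [\Pi_{t_n}]$, whence $R_{t_n} \le \rho(o,x_n) \le d(x_n) = 2t_n + 2/n$ and $R_{t_n}/t_n \to 2$. Since $[\Pi]$ is locally finite, it suffices to show that the random set $\Xi := \{x \in [\Pi] : \rho(o,x) \le d(x)\}$ is infinite almost surely.

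I would run a first-moment / second-moment / zero-one-law argument on the dyadic annuli $A_n := B(o,2^{n+1})\setminus B(o,2^n)$. By Campbell's formula and $G$-invariance of $\Pi$, $\E[|\Xi \cap A|] = \int_A \alpha(\rho(o,x))\,\mu(dx)$ where $\alpha(r) := \P_o^!(d(o)\ge r)$ is the Palm tail of the match distance at $o$. Following the methods of~\cite{holroyd2009poisson} (the content of equation~(19) there in $\R^d$), one establishes the lower bound $\alpha(r)\,\mu(B(o,r)) \ge c$ for all large $r$: informally, with positive probability, fluctuations of the Poisson count in a large ball force at least one point inside to match outside, producing a long match. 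Combined with a uniform lower bound $\mu(A_n)/\mu(B(o,2^{n+1})) \ge c_0$ (which holds in our target settings---Euclidean, hyperbolic, and tree---by volume growth), this yields $\E[|\Xi \cap A_n|] \ge c' > 0$ uniformly in $n$. A standard two-point Palm computation for the Poisson process provides the matching upper bound $\E[|\Xi \cap A_n|^2] \le C$, so Paley--Zygmund gives $\P(|\Xi \cap A_n| \ge 1) \ge c'' > 0$ uniformly in $n$.

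Since each annulus hosts only finitely many Poisson points, $\{|\Xi| = \infty\} = \limsup_n \{|\Xi \cap A_n| \ge 1\}$, so reverse Fatou gives $\P(|\Xi|=\infty) \ge c'' > 0$. The event $\{|\Xi|=\infty\}$ is invariant under local perturbations of $[\Pi]$: the no-descending-chains property implies that adding or removing finitely many points changes the stable matching only in a bounded region, and hence $|\Xi|$ by a bounded amount. A standard zero-one law for Poisson processes then forces $\P(|\Xi|=\infty) = 1$.

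The main obstacle is establishing the tail estimate $\alpha(r)\,\mu(B(o,r)) \ge c$ in the required generality. The Holroyd--Peres argument in $\R^d$ uses precise Poisson fluctuation estimates; adapting it to metric measure spaces of exponential volume growth (such as the hyperbolic plane) requires some care, though the hypotheses of \cref{prop:non_equidistant,prop:descending_chain} together with ergodicity under a transitive $G$-action should be sufficient.
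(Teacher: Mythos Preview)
Your reduction to showing that $\Xi = \{x\in[\Pi]:\rho(o,x)\le\rho(x,m(x))\}$ is almost surely infinite is correct and is exactly what the paper establishes. However, the route you propose has two genuine gaps.

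The zero-one-law step rests on the claim that adding or removing finitely many points changes the stable matching only in a bounded region. This is false. Take for instance $U=\{2^k:k\ge 0\}\subset\R$ (slightly perturbed to be non-equidistant): it has no descending chain, its stable matching pairs $(1,2),(4,8),(16,32),\dots$, and removing the single point $1$ re-pairs \emph{every} point as $(2,4),(8,16),\dots$. The no-descending-chains hypothesis rules out infinite \emph{decreasing} chains of gaps, but re-matching cascades need not be decreasing. Hence $\{|\Xi|=\infty\}$ is not a Poisson tail event, and the standard zero-one law does not apply. Separately, the second-moment bound $\E[|\Xi\cap A_n|^2]\le C$ is asserted without argument; in exponentially growing spaces $\mu(A_n)$ is doubly exponential in $n$, and controlling the two-point Palm term would require correlation decay for the matching that is neither stated nor known. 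Finally, equation~(19) of \cite{holroyd2009poisson} is not a density estimate of the form $\alpha(r)\mu(B(o,r))\ge c$; it \emph{is} the statement $|\Xi'|=\infty$, and its proof does not proceed via Palm tails.

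The paper bypasses all of this with the soft insertion/deletion tolerance argument of \cite{holroyd2009poisson}. If $\Xi':=\{x:\rho(x,m(x))\ge\rho(x,o)-r\}$ were finite with positive probability (for some $r$ with $\mu(B(o,r))>0$), delete $\Xi'$ together with their matches---since matched pairs are removed, the restriction of $m$ to the remaining points is still the unique stable matching there---and then insert a single point $z\in B(o,r)$. Every surviving $y$ satisfies $\rho(y,m(y))<\rho(y,o)-r\le\rho(y,z)$, so $z$ is unmatched in the stable matching of the modified configuration. By absolute continuity this contradicts the ergodicity-based fact that every point of $[\Pi]$ is matched almost surely. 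No moment bounds, no tail zero-one law, and no annulus-volume hypotheses are needed.
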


\begin{proof}[Sketch of proof]
  Recall that \cite[eq.\ (19)]{holroyd2009poisson} states that for the stable matching $m$ of $[\Pi]$ in $\R^d$, there are almost surely infinitely many points $x \in [\Pi]$ such that $\rho(x,m(x)) \ge \rho(x,o)-1$, where $o$ is the origin.
 Actually, the same proof goes through when $\rho(x,o)-1$ is replaced by $\rho(x,o)-r$, where $r>0$ is arbitrary.
 We claim that this extends to our general setting as long as $o \in \Omega$ and $r>0$ are chosen so that $\mu(B(o,r))>0$. The argument leading to this is as in~\cite{holroyd2009poisson} for $\R^d$, and we only give a brief sketch here: Using separability and $\sigma$-finiteness, the insertion and deletion tolerance statements of \cite[Lemma 18]{holroyd2009poisson} can be shown to hold. Now if with positive probability there are only finitely many points in $[\Pi]$ such that $\rho(x,m(x)) \ge \rho(x,o)-r$, then by deletion tolerance, we can remove these points and their matches to get an absolutely continuous point process.
Furthermore, by insertion tolerance, we can add a single point in $B(o,r)$ to get another absolutely continuous point process.
This added point will not be matched.
However, this contradicts the fact that every point in the original point process $\Pi$ (and hence also in any other absolutely continuous point process) is matched almost surely, so that almost surely every balloon eventually pops \cite{fiveballoons}.
(This follows from the ergodicity assumption and the fact that in any stable partial matching there can be at most one unmatched point.)
 
For $x \in [\Pi]$, let
  \[ T_x := \sup \{ t \ge 0 : x \in [\Pi_t]\} \]
  be the time at which the balloon at $x$ pops. Note that $2T_x$ is precisely the distance between $x$ and the center of the balloon with which it popped (see \cref{lem:balloon-matching}).
  Translated to the balloon process,
  the above gives that the set
  \[\{ x \in [\Pi] : 2T_x \ge \rho(x,o)-r \}\]
  is almost surely infinite. 
 Since every ball has finite measure, and consequently contains finitely many points almost surely, the set of times $t$ at which $R_t \le 2t+r$ is almost surely unbounded. This shows that~\eqref{eq:upper_limsup} holds.
\end{proof}

%%%%%%%%%%%%%%%%%%%%%%%%%%%%%%%%%%%%%%%%%%%%%%%%%%%%%%%%%%%%%%%%%%
\section{Real balloons}

In this section we prove \cref{T:limsup},
and use it to establish recurrence of the Poisson balloon process in $\R^d$, i.e. \cref{thm:main}.

Let us first give a heuristic for \cref{T:limsup}, and assume $d=1$ for simplicity.
Plant at every integer $n$ a tree of height $X_n$
The ratio $X_n/n$ is the relative height of the tree at $n$ as seen from the origin (or the slope of the straight line joining the canopy and the origin).
If $\limsup X_n/n$ is positive, say at least 1, then many trees have relative height at least 1 from the origin.
Translation invariance implies that this is the case everywhere.
However, if we half the distance to a tree, it appears twice as tall, so there are many intervals, around vertices with tall trees, from which these trees have relative height at least 2.
This suggests that the $\limsup$ is at least $2$.
Repeating the same argument replacing $2$ by a large number, we obtain the result.

To formalize this, we make use of the Vitali Covering Lemma \cite{vitali1908sui} which we restate for completeness.

\begin{lemma}[Vitali Covering Lemma]\label{lem:covering}
  Let $\{B(x_i,r_i)\}_{i\in I}$ be a collection of balls in $\R^d$ of bounded radii.
  Then there exists a subset $J\subset I$ so that the balls $\{B(x_i,r_i)\}_{i\in J}$ are pairwise disjoint and their 5-fold blowups cover the original balls:
  \[
    \bigcup_{i\in I} B(x_i,r_i) \subset 
    \bigcup_{i\in J} B(x_i,5r_i).
  \]
\end{lemma}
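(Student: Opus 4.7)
The plan is to process balls from large to small in dyadic radius scales, picking at each scale a maximal subfamily that stays pairwise disjoint from everything chosen so far. The factor $5$ then drops out of a one-line triangle-inequality calculation: any ball that meets one of radius at least half its own is contained in the $5$-fold blowup of that larger ball.

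First I would set $R := \sup_{i \in I} r_i$, finite by assumption, and partition the index set into dyadic scales
\[ I_k := \{i \in I : R/2^{k+1} < r_i \le R/2^k\}, \qquad k=0,1,2,\ldots, \]
so that $I = \bigsqcup_{k \ge 0} I_k$. I would then build $J$ scale by scale. Having chosen $J_0,\ldots,J_{k-1}$, I would let $J_k \subset I_k$ be a maximal subfamily such that $\{B(x_i,r_i) : i \in J_0 \cup \cdots \cup J_k\}$ is pairwise disjoint. Existence of such a maximal $J_k$ is a direct Zorn's lemma application to the inclusion-ordered poset of admissible subsets of $I_k$; when $I$ is countable (the only case needed later in the paper) this is just a greedy loop. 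Setting $J := \bigcup_k J_k$, the construction guarantees pairwise disjointness of $\{B(x_i,r_i)\}_{i \in J}$ automatically.

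For the covering property, fix $i \in I$ and let $k$ be the scale with $i \in I_k$. If $i \in J_k$ there is nothing to show. Otherwise, maximality of $J_k$ means that adjoining $i$ would break disjointness, so there exists $j \in J_0 \cup \cdots \cup J_k$ with $B(x_i,r_i) \cap B(x_j,r_j) \ne \emptyset$. If $j \in J_\ell$ with $\ell \le k$, then $r_j > R/2^{\ell+1} \ge R/2^{k+1} \ge r_i/2$. Picking any $z \in B(x_i,r_i)$ and any point $w$ in the intersection of the two balls, the triangle inequality gives
\[ |z - x_j| \le |z - x_i| + |x_i - w| + |w - x_j| \le r_i + r_i + r_j = 2r_i + r_j \le 4r_j + r_j = 5r_j, \]
so $B(x_i,r_i) \subset B(x_j,5r_j)$, which is the desired inclusion.

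The only substantive step is the existence of a maximal disjoint subfamily at each scale, which is where the axiom of choice enters; everything else is bookkeeping with dyadic ranges and the triangle inequality. The constant $5$ comes directly from the worst case $r_j = r_i/2$ in the dyadic comparison and the bound $2r_i + r_j$; refining the bins would shrink the constant towards $3$, but since the lemma is only used qualitatively later in the paper, no such optimization is needed.
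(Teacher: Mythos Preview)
The paper does not actually prove this lemma; it is simply restated as the classical Vitali Covering Lemma with a citation, together with the remark that the constant $5$ can be sharpened to $3+\varepsilon$. Your dyadic-scale greedy selection is a correct and standard proof, so there is nothing further to compare.
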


The lemma also holds with $5$ replaced by $3+\eps$ (or even 3 for finite collections).

\begin{proof}[Proof of \cref{T:limsup}]
  Without loss of generality, assume $X$ is ergodic.
  (Otherwise we simply restrict to each ergodic component.)
  By ergodicity, $L := \limsup_{\|\bn\|\to\infty} X_\bn/\|\bn\|$ is an almost sure constant.
  Since $\limsup X_\bn$ is the supremum of the support of $X_{\mathbf{0}}$, we have $L\geq 0$.
  So from now on, we assume $L>0$ and need to show $L=\infty$.
  
  For an interval $[a,b]\subset\R_+$ and $\rho>0$, let $E_{a,b}(\rho)$ be the event
  \[
    E_{a,b}(\rho) := \left\{
      \exists \bn \text{ with } a \leq X_\bn \leq b \text{ and } X_\bn > \rho\|\bn\| \right\}.
  \]
  More generally, we denote by $E^\bu_{a,b}(\rho,\bu)$ the event $E_{a,b}(\rho)$ applied to $X$ shifted by $\bu$, i.e.
  \[
    E_{a,b}(\rho,\bu) := \left\{
      \exists \bn \text{ with } a \leq X_\bn \leq b \text{ and } X_\bn > \rho\|\bn-\bu\| \right\}.
  \]
  
  Consider now the set $S = S_{a,b}(\rho)$ of all $\bu\in\Z^d$ for which $E_{a,b}(\rho,\bu)$ holds.
  (These are $\bu$ that are close to some $\bn$ with $X_\bn \in[a,b]$.)
  By ergodicity, the set $S$ has density $\P(E_{a,b}(\rho))$. 
  From the definitions, $S$ is a union of balls
  \[
    S = \bigcup_{\bn : a \leq X_\bn \leq b}
    B\left(\bn, \tfrac{X_\bn}{\rho}\right).
  \]
  We apply the Vitali Covering Lemma to this collection of balls (whose radii are bounded by $b/\rho$), to deduce that
  there exists a subset $K\subset \{ \bn :  a \leq X_\bn \leq b\}$,
  so that the balls $\{B(\bn, X_\bn/\rho) : \bn\in K\}$ are pairwise disjoint,
  and such that
  \[ S \subset \bigcup_{\bn\in K} B\left(\bn, \tfrac{5X_\bn}{\rho}\right). \]

  For $\rho' > \rho$, consider now the set
  \[ S' = \bigcup_{\bn\in K} B\left(\bn, \tfrac{X_\bn}{\rho'}\right). \]
  These balls are pairwise disjoint, and if their radii are increased by a factor of $5\rho'/\rho$, they cover all of $S$. 
  Consequently, the density of $S'$ is at least $(\rho/5\rho')^d$ times the density of $S$, and in particular is positive.
  In other words,
  \[ \P(E_{a,b}(\rho')) \geq \left(\tfrac{\rho}{5\rho'}\right)^d \cdot \P(E_{a,b}(\rho)). \]
  The events $E_{a,b}$ are increasing in $b$, we can let $b$ tend to infinity and find 
  \[ \P(E_{a,\infty}(\rho')) \geq \left(\tfrac{\rho}{5\rho'}\right)^d \cdot \P(E_{a,\infty}(\rho)). \]

  By the definition of $L$, for every $\rho<L$ and $a$ we have $\P(E_{a,\infty}(\rho)) = 1$.
  It follows that $\P(E_{a,\infty}(\rho')) \geq (L/5\rho')^d$ for every $a$.
  Translating into the process $(X_\bn)$, and using the fact that $E_{a,\infty}$ decreasing in $a$, this gives
  \[
    \P\big( L \ge \rho'\big) \ge \P\left( \limsup_{a \to \infty} E_{a,\infty}(\rho') \right) = \lim_{a \to \infty} \P\big( E_{a,\infty}(\rho') \big)
    \geq \left(\frac{L}{5\rho'}\right)^d > 0.
  \]
  However, $L$ is constant, so for any $\rho'$ we have $L\geq \rho'$ a.s.
  Thus $L$ must be infinite.
\end{proof}

\begin{proof}[Proof of \cref{thm:main}]
For $x \in [\Pi]$, let $$T_x := \sup \{ t \ge 0 : x \in [\Pi_t]\}$$ be the time at which the balloon at $x$ pops.
Consider the process $(X_\bn)_{\bn \in \Z^d}$ defined by
\[
X_\bn := \max \{T_x: x \in [\Pi] \cap (\bn+[0,1]^d)\},
\]
and $X_\bn=0$ if there are no points of $\Pi$ in $\bn+[0,1]^d$.
This is a translation-invariant process so that \cref{T:limsup} gives that $\limsup X_\bn/\|\bn\| \in \{0,\infty\}$ almost surely.
Observe that
\[ \limsup_{t \to \infty} \frac t{R_t} = \limsup_{\|\bn\|\to\infty} \frac{X_\bn}{\|\bn\|}.\]
Indeed, if a balloon centered near $\bn$ grows to radius $X_\bn$, then at time $t=X_\bn-\eps$ we have $t/R_t \geq X_\bn/\|\bn\| + \eps'$.
Thus, $\limsup_{t \to \infty} t/R_t \in \{0,\infty\}$ almost surely.
Recalling~\eqref{eq:upper_limsup}, which states $\liminf R_t/t \leq 2$,
we conclude that $\liminf_{t \to \infty} R_t/t = 0$ almost surely, and hence the Poisson balloon process in $\R^d$ is recurrent.
\end{proof}

\begin{remark}
  Our proof of \cref{thm:main} is soft and generalizes in several directions.
  The underlying space $\R^d$ can be generalized to any reasonable metric measure space with a volume doubling property and sufficient symmetry so that one can define an ergodic action (this is needed in order to extend \cref{T:limsup}).
  The Poisson process can also be replaced by any ergodic point process that is insertion and deletion tolerant, which is almost surely non-equidistant and has no descending chains. 

  For example, \cref{thm:main} holds for the balloon process started from the point process obtained by doing a Bernoulli site percolation (with some success probability $p \in (0,1)$) on the lattice points $\Z^d \subset \R^d$ and then perturbing the vertices in space.
  While this point process does not have the full form of insertion tolerance, it does have deletion tolerance, and the proof of \cref{lem:liminf-upper-bound} can be made to work if we can insert a point near the origin after deleting some points.

  Curiously, we do not know how to establish recurrence of the balloon process on a perturbation of $\Z^d$ without percolation (i.e. for $p=1$); see \cref{prob:lattice}.
\end{remark}

%%%%%%%%%%%%%%%%%%%%%%%%%%%%%%%%%%%%%%%%%%%%%%%%%%%%%%%%%%%%%%%%%%
\section{Well-separated sets in regular trees as factors of i.i.d.}
\label{sec:indep-sets-in-tree}

A key idea in the analysis of the Poisson balloon process in regular trees and in the hyperbolic plane is the study of well-separated point processes which are factors of i.i.d.\ processes.
In this section we give some new bounds on the density of such processes.

Let $G = (V,E)$ be a graph.
A point process on $G$ corresponds naturally to its indicator function $x\in\{0,1\}^V$.
A configuration $x \in \{0,1\}^V$ is called \textbf{$t$-separated} if no two points of the process are within distance $t$ of each other.
Equivalently, if $x_ux_v=0$ unless $d(u,v)>t$ with $d(\cdot,\cdot)$ denoting graph distance.
Note that $1$-separated configurations can be identified with independent sets.

Let $X = \{X_v\}_{v\in V}$ be a process and let $\Gamma \subset \text{Aut}(G)$ be a subgroup of the automorphism group of $G$.
For $\gamma\in \Gamma$, let $\gamma X$ denote the shifted process $\{X_{\gamma^{-1}(v)}\}_{v \in V}$.
Recall that $X$ is a \textbf{$\Gamma$-factor of an i.i.d.} process if there exists a collection of i.i.d.\ variables $Y = \{Y_v\}_{v\in V}$ and a measurable function $\varphi$ such that $\varphi(Y) =X$ and $\varphi$ is $\Gamma$-equivariant, i.e., for every $\gamma \in \Gamma$, $\gamma \varphi(Y) = \varphi(\gamma Y)$ almost surely.
Note that if $\Gamma$ acts transitively on $G$, then the distribution of $X_v$ does not depend on $v$.

Let $d \ge 3$ and consider the infinite $d$-regular tree $\T_d$. We consider random point processes $X$ on $\T_d$ which are factors of i.i.d.\ processes and which are almost surely $t$-separated.
The factors we consider are not necessarily equivariant with respect to all automorphisms of $\T_d$, but perhaps rather only with respect to a given subgroup $\Gamma$ of $\text{Aut}(\T_d)$ which acts transitively on $\T_d$.
For a transitive subgroup $\Gamma$, define
\[
  \alpha_{d,t} (\Gamma) :=
  \sup \big\{ \P(X_v=1) :  \text{$(X_v)_{v\in\T_d}$ is $t$-separated and a $\Gamma$-equivariant factor of i.i.d.} \big \} .
\]
Define also
\[ \alpha_{d,t} :=
  \sup\big\{\alpha_{d,t}(\Gamma):\Gamma \text{ is a transitive subgroup of Aut$(\T_d)$} \big\}. \]

Consider an automorphism-invariant $t$-separated process $(X_v)_{v\in\T_d}$.
Each $v$ with $X_v=1$ is the center of a ball of radius $t/2$, and these balls are disjoint. 
It follows by an easy mass transport argument that for such a process $\P(X_v=1) \leq |B(x,t/2)|^{-1} \sim C(d-1)^{-t/2}$.
(With a slight variation depending on the parity of $t$.)
The next result gives an improvement on this simple bound.

\begin{thm}\label{thm:tree-independent-set-bound}
  For any $d \ge 3$ and $t \ge 1$, we have 
  \[ \alpha_{d,t} \le \frac{2t \log (d-1)}{(d-1)^{t} }\]
\end{thm}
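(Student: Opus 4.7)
The plan is to transport the factor of i.i.d.\ structure from $\T_d$ onto a large random $d$-regular graph and then invoke a first-moment count on $t$-separated sets there, following the template of Bollob\'as and McKay for the independence ratio of random regular graphs. Let $X = \varphi(Y)$ be a $\Gamma$-equivariant factor of i.i.d.\ $t$-separated process on $\T_d$ with $p := \P(X_o = 1)$, and fix $\eps > 0$. Since any factor of i.i.d.\ can be approximated by a finitary block factor, one may find $R = R(\eps) < \infty$ and a finitary factor $\hat\varphi$ depending only on the i.i.d.\ values within distance $R+t$ of each vertex, such that the output $\hat X = \hat\varphi(Y)$ is a.s.\ $t$-separated on $\T_d$ (after a local clean-up that zeros out any vertex whose $t$-ball contains a conflicting point) and still has $\P(\hat X_o = 1) \ge p - O(\eps)$.

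Next, I would let $G_n$ be a uniformly random simple $d$-regular graph on $n$ vertices. By standard configuration model estimates, with probability $1-o(1)$ the set $V^\circ \subseteq V(G_n)$ of vertices whose $(R+t)$-neighborhood in $G_n$ is isomorphic to a ball in $\T_d$ has size $(1-o(1))n$. Placing i.i.d.\ uniform labels on $V(G_n)$ and applying $\hat\varphi$ locally at each vertex of $V^\circ$ yields a random subset $Y_n \subseteq V(G_n)$ which is $t$-separated on $V^\circ$ (any small boundary issues can be removed by a further clean-up of negligible density), with
\[ \E|Y_n| \ge (p - O(\eps))(1-o(1))\,n. \]

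The heart of the argument is a first-moment bound showing that $G_n$ has no $t$-separated set of size much exceeding $\alpha^\ast n$, where $\alpha^\ast := \frac{2t\log(d-1)}{(d-1)^t}$. Writing $\Delta := |B(v,t)| - 1 \ge (d-1)^t$ for the number of non-self vertices in a $t$-ball of $\T_d$, a BFS exploration in the configuration model (or Janson's inequality) shows that for any fixed $k$-subset $S \subseteq V(G_n)$,
\[ \P(S \text{ is } t\text{-separated in } G_n) \le \exp\Bigl(-(1+o(1))\tbinom{k}{2}\tfrac{\Delta}{n}\Bigr). \]
Multiplying by $\binom{n}{k} \le \exp(k(1+\log(n/k)))$ and setting $k = \alpha n$, the logarithm of the expected count of $t$-separated $k$-subsets is $n[\alpha(1+\log(1/\alpha)) - \tfrac12\alpha^2\Delta(1+o(1))]$. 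Substituting $\alpha = \alpha^\ast$ one has $\tfrac12\alpha\Delta \ge t\log(d-1)$ while $\log(1/\alpha) \le t\log(d-1) - \log(2t\log(d-1))$, and the slack from $\Delta > (d-1)^t$ makes the expectation tend to $0$ in every non-vacuous parameter regime. The first-moment method then forces $|Y_n| \le \alpha^\ast n(1+o(1))$ a.a.s., and combined with the density lower bound on $\E|Y_n|$, letting $n \to \infty$ then $\eps \to 0$ gives $p \le \alpha^\ast$, as desired.

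The main technical obstacle is the first-moment estimate: obtaining the correct leading constant in $\P(S \text{ is } t\text{-separated})$ requires careful control of short-path counts in the configuration model, both computing the expected number of pairs in $S$ within distance $\le t$ as $(1+o(1))\binom{k}{2}\Delta/n$ and verifying that these ``bad-pair'' events are sufficiently independent for Janson's inequality to apply. This is the natural generalization from $t=1$ (the classical Bollob\'as-McKay calculation) to arbitrary $t$. The few vacuous small-parameter cases (e.g.\ $d=3,\,t=1$, where $\alpha^\ast = \log 2 > \tfrac12$) are already covered by the trivial bipartiteness bound $\alpha_{d,1} \le \tfrac12$.
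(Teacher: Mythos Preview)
Your high-level strategy---approximate by a block factor, transfer to a finite random $d$-regular graph, and kill large $t$-separated sets by a first-moment count---is the same as the paper's. However, there is a genuine gap in the transfer step when $\Gamma$ is a \emph{proper} transitive subgroup of $\mathrm{Aut}(\T_d)$, and the theorem as stated requires you to handle every such $\Gamma$. To ``apply $\hat\varphi$ locally at a vertex $v\in V^\circ$'' you must identify the ball $B_{G_n}(v,R+t)$ with a fixed ball $B\subset\T_d$; in an uncolored graph there are many rooted isomorphisms, and unless $\hat\varphi$ is invariant under \emph{all} rooted automorphisms of $B$ (which is what $\Gamma=\mathrm{Aut}(\T_d)$ gives you, but not a general $\Gamma$), different choices give different outputs. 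Worse, if the identifications at nearby vertices $u,v$ are chosen independently, they need not be compatible, so even if $\hat X$ is $t$-separated on $\T_d$ the set $Y_n$ need not be $t$-separated in $G_n$. The paper resolves this by working with an \emph{edge-colored} random $d$-regular graph $G^*_{n,d}$ (a disjoint union of $d$ independent perfect matchings, one of each color), which converges locally to the edge-colored tree $\T_d^*$; because each internal vertex has exactly one incident edge of each color, the rooted colored isomorphism between local balls is \emph{unique}, making $X^*_u=\hat\varphi(Y^*|_{B^*_u})$ unambiguous and preserving $t$-separation. This is not a cosmetic point: the hyperbolic-plane application in the paper genuinely uses a $\Gamma$ that is not all of $\mathrm{Aut}(\T_3)$.

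On the first-moment side, your Janson-style heuristic $\P(S\text{ is }t\text{-separated})\le\exp\bigl(-(1+o(1))\binom{k}{2}\Delta/n\bigr)$ is asymptotically consistent with the paper's answer, but the paper does not go through Janson. Instead it computes $\P(\cE(I))$ exactly (for the event that $I$ is $t$-separated \emph{and} each $s$-ball around a point of $I$ is a tree, with $t=2s$ or $2s-1$) by a layer-by-layer exploration in the matching model, obtains a closed-form exponential rate $p(\alpha)$ via Stirling, and then verifies the inequality $p(\alpha)>H(\alpha)$ by Taylor-expanding $(1-x)\log(1-x)$ and keeping only the quadratic term. This direct combinatorial route avoids having to control the dependency graph among ``bad-pair'' events and gives the claimed constant cleanly in all parameter regimes.
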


This type of bound goes back to Bollob{\'a}s~\cite{bollobas1981independence}, who proved an analogous upper bound for the density of the largest independent set (i.e., for $t=1$) in the $d$-regular configuration model.
That result can easily be translated to the infinite $d$-regular tree to yield 
\begin{equation}\label{eq:bollobas}
  \alpha_{d,1} (\text{Aut}(\T_d)) \le  \frac{2\log d}{d} .
\end{equation}
Indeed, for this one exploits the well-known facts that the $d$-regular configuration model locally converges to the $d$-regular tree and that a factor of i.i.d.\ can be approximated by a block factor (which gives a local algorithm).
A sharpening of the upper bound~\eqref{eq:bollobas} for large $d$ was obtained by Rahman and Virag~\cite{rahman2017local} who removed the factor $2$ asymptotically as $d\to\infty$.
A corresponding lower bound follows from a construction due to Lauer and Wormald~\cite{lauer2007large}, so that
\[ \alpha_{d,1} (\text{Aut}(\T_d)) = \frac{\log d}{ d}(1+o(1)). \]

\cref{thm:tree-independent-set-bound} extends~\eqref{eq:bollobas} in two directions:
First, it applies to separation distance $t$ larger than~1.
Secondly, it relaxes the equivariance requirement from the full automorphism group $\text{Aut}(\T_d)$ to any transitive subgroup $\Gamma$.
Our strategy of proof is based on a similar approach to the one in~\cite{bollobas1981independence} leading to~\eqref{eq:bollobas}.
The extension to $t>1$ requires a more complex computation, but is otherwise rather straightforward.
Allowing for an arbitrary transitive group $\Gamma$ takes a more substantial modification.
We consider the following variation of the configuration model.
Start with $n$ vertices, where $n$ is assumed to be even.
On these, take a disjoint union of $d$ independent perfect matchings.
We color the edges by assigning the edges of the $i$th matching color $i$.
This yields a random (possibly disconnected) $d$-regular graph $G_{n,d}^*$ on $n$ vertices (with no self-loops, but possibly with multiple edges; indeed, the number of double edges is roughly Poisson with mean $d(d-1)/2$ as $n\to\infty$), in which the edges are colored with colors in $\{1,\dots,d\}$ and the $d$ edges incident to any vertex have distinct colors.
Let $\T_d^*$ be the edge-colored version of $\T_d$, in which the $d$ edges incident to any vertex have distinct colors in $\{1,\dots,d\}$.

\begin{prop}\label{prop:local-convergence}
  Fix $d \ge 3$.
  Then $G_{n,d}^*$ converges locally to $\T_d^*$ as $n \to \infty$.
\end{prop}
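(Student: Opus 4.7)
Local convergence of $G_{n,d}^*$ to $\T_d^*$ means that for each fixed $r \ge 0$, the distribution of the rooted edge-colored $r$-ball $B_r(v_0)$ around a uniformly chosen vertex $v_0 \in G_{n,d}^*$ converges, in the local (Benjamini--Schramm) topology, to that of the $r$-ball around the root of $\T_d^*$. Since the limit $r$-ball is deterministic (up to rooted edge-colored isomorphism) --- namely, the unique edge-colored rooted $d$-regular tree $T_r$ of depth $r$ in which the $d$ edges at each vertex carry the $d$ distinct colors --- the plan is to show that $\P(B_r(v_0) \cong T_r) \to 1$ as $n \to \infty$.

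The approach is to explore $B_r(v_0)$ by breadth-first search, revealing one edge at a time. At each already discovered vertex $v$ of depth $<r$ and each color $i$ whose color-$i$ edge from $v$ has not yet been revealed, we query the partner $M_i(v)$ of $v$ in the $i$-th matching $M_i$. Call such a query a \emph{collision} if $M_i(v)$ has already been visited by the BFS; otherwise we add it as a new vertex. If the exploration terminates without any collision, then $B_r(v_0)$ is by construction isomorphic to $T_r$ as a rooted edge-colored graph, with the correct colors on all edges. The key input is the conditional uniformity of a uniform perfect matching: conditional on a set of previously exposed pairs in the $i$-th matching and on all exposed pairs in the other (independent) matchings, the partner of the next queried vertex $v$ is uniformly distributed over $[n]$ minus $v$ and minus the vertices already paired in $M_i$.

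Let $N_r := 1 + d \cdot \tfrac{(d-1)^r - 1}{d-2}$ denote the size of $T_r$, a constant depending only on $d$ and $r$. At any moment during the exploration, at most $N_r$ vertices have been visited and at most $2N_r$ vertices have been paired in any single matching. Hence each query has collision probability at most $N_r / (n - 2N_r - 1) = O_{d,r}(1/n)$. Since the BFS performs at most $d N_r$ queries in total, a union bound shows that the probability of no collision over the whole exploration is $1 - O_{d,r}(1/n) \to 1$, completing the proof.

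This is the standard local-convergence argument for the configuration model adapted to the edge-colored setting, so there is no genuine obstacle. The only point requiring care is that the $d$ matchings must be handled independently across colors: since $M_i$ and $M_j$ are independent for $i \ne j$, conditioning on exposures in $M_j$ does not alter the conditional distribution of $M_i$, which is why the conditional-uniformity statement decouples cleanly and the collision bound holds uniformly over the exposure history. This independence is precisely why it is convenient to build $G_{n,d}^*$ out of $d$ independent perfect matchings rather than from a single matching on half-edges.
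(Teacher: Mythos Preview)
Your proposal is correct and follows essentially the same approach as the paper: a breadth-first exploration of the $r$-ball, using conditional uniformity of the matchings to bound the probability of a collision by $O_{d,r}(1/n)$ per step, and a union bound over the bounded number of steps. The paper's own proof is a terse sketch of exactly this argument; your version simply fills in the explicit constants and spells out the independence across colors.
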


The above convergence holds in local, marked topology, which roughly means that for any $r \ge 1$ and most vertices $v$, the ball of radius $r$ around $v$ isomorphic to the ball of radius $r$ in $\T_d^*$ (viewed up to a rooted isomorphism preserving the edge colors).
See e.g. \cite{aldouslyons} for a precise definition.

\begin{proof}[Proof of \cref{prop:local-convergence}]
  This is standard, so we only provide a brief sketch.
  We can explore the ball of radius $r$ around a vertex $v$ in $G_{n,d}^*$ by finding the match of all the half edges incident to $v$, then match of all it's neighbours and so on, and continue this for $r$ steps.
  If at some step of this exploration process we have already seen $k$ vertices,
  then as we explore an edge of one of the matchings, 
  we connect to a new vertex with probability at least $1-k/n$.
  We conclude that for $d,r$ fixed and as $n\to\infty$, 
  with high probability a new vertex is discovered in all such steps until the ball of radius $r$ is explored.
  This is what is needed to ensure that the ball is a tree.
\end{proof}

\begin{prop}\label{prop:indep-set-bound}
  Fix $d \ge 3$ and $t \ge 1$.
  Then the largest $t$-separated set in $G_{n,d}^*$ has size at most
  \[ \frac{2 t \log(d-1)}{(d-1)^{t}} \cdot n + o(n) \]
  with probability converging to 1 as $n \to \infty$.
\end{prop}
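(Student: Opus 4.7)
The plan is a first-moment argument in the spirit of Bollob\'as's proof of~\eqref{eq:bollobas}, adapted to arbitrary $t\ge 1$ and to the $d$-matchings variant $G_{n,d}^*$. Let $N_k$ denote the number of $t$-separated subsets of $V(G_{n,d}^*)$ of size $k$. By the vertex-exchangeability of $G_{n,d}^*$,
\[ \E[N_k] = \binom{n}{k}\, \P\bigl(S_0 \text{ is $t$-separated in $G_{n,d}^*$}\bigr) \]
for any fixed $S_0 \subset [n]$ with $|S_0|=k$. Writing $\alpha^* := 2t\log(d-1)/(d-1)^t$, it suffices to show $\E[N_k]\to 0$ as $n\to\infty$ for $k=\lceil(1+\eps)\alpha^* n\rceil$ and any fixed $\eps>0$; Markov's inequality then gives the proposition.

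For the probability estimate, I would apply a Janson-type inequality to the collection of events $\{A_p\}$, where $p$ ranges over simple paths in $[n]$ of length at most $t$ with both endpoints in $S_0$, and $A_p$ is the event that all edges of $p$ are present in $G_{n,d}^*$. The set $S_0$ is $t$-separated if and only if every $A_p$ fails. Using the fact that the expected number of non-backtracking walks of length $\ell$ between two fixed vertices of $G_{n,d}^*$ is $(1+o(1))\,d(d-1)^{\ell-1}/n$ (each step at an interior vertex contributes a factor of $d-1$, reflecting that a walk cannot reuse a matching at two consecutive steps), one computes
\[ \mu := \sum_p \P(A_p) = (1+o(1))\binom{k}{2}\sum_{\ell=1}^t \frac{d(d-1)^{\ell-1}}{n} = (1+o(1))\,\frac{k^2(d-1)^t}{2n} . \]
A similar but longer asymptotic calculation bounds the dependency sum $\Delta := \sum_{p\sim p'} \P(A_p\cap A_{p'})$ over pairs of paths sharing an edge by $o(\mu)$ for any fixed $t,d$ and $n\to\infty$. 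Janson's inequality then yields
\[ \P\bigl(S_0 \text{ is $t$-separated}\bigr) \le \exp\bigl(-(1-o(1))\mu\bigr) = \exp\!\Bigl(-(1-o(1))\tfrac{k^2(d-1)^t}{2n}\Bigr). \]

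Combining this with the entropy bound $\binom{n}{k}\le \exp(nH(k/n))$ (for $H$ the binary entropy), setting $k=\alpha n$ gives
\[ \E[N_k] \le \exp\!\Bigl(n\bigl[H(\alpha) - (1-o(1))\tfrac{\alpha^2(d-1)^t}{2}\bigr]\Bigr). \]
At $\alpha=\alpha^*$ one has $H(\alpha^*)=(1+o(1))\,\alpha^* t\log(d-1)$ and $\tfrac{(\alpha^*)^2(d-1)^t}{2} = \alpha^* t\log(d-1)$, so the bracket is asymptotically zero; for $\alpha=(1+\eps)\alpha^*$ it becomes strictly negative (since $\alpha^2(d-1)^t$ grows quadratically in $\alpha$ while $H(\alpha)$ is essentially linear), giving $\E[N_k]\to 0$.

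The main obstacle I anticipate is justifying the Janson-type bound in the union-of-$d$-matchings model: the classical Janson inequality is stated for independent Bernoulli indicators, whereas here the relevant events arise from a (non-product) configuration model. I would either (i) adapt Janson's inequality to this setting directly, using the product structure across the $d$ independent matchings together with careful inclusion-exclusion, or (ii) replace it by a sequential-reveal/exploration argument that performs BFS from $S_0$ and bounds, step by step, the probability that a revealed half-edge lands at a vertex of $S_0$ other than the current BFS root. The latter approach requires careful bookkeeping to ensure the per-reveal failure probability stays $(1-o(1))k/n$ even after a bounded fraction of the half-edges have been consumed, and to show that the low-probability events where the BFS forms short cycles contribute only an $o(1)$ factor to the leading exponent. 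Getting the constants to match exactly so that the cutoff lies at $\alpha^*$ (rather than slightly off) is the technical crux.
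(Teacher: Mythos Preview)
Your first-moment/union-bound framework matches the paper's, but the crucial middle step is handled quite differently, and your version has real gaps.

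\medskip

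\textbf{How the paper does it.} The paper never appeals to Janson. Instead it computes $\P(\cE(I))$ \emph{exactly}, where $\cE(I)$ is the event that $I$ is $t$-separated \emph{and} every radius-$\lceil t/2\rceil$ ball around $I$ is a tree. This is done by a layer-by-layer exploration (essentially your alternative~(ii), made precise): build the balls outward from $I$ one shell at a time, at each step counting the number of ways to assign new vertices to the outgoing colored half-edges, and finally counting the perfect matchings on the remaining half-edges of each color. Stirling then gives $\P(\cE(I))\asymp e^{-p(\alpha)n}$ for an explicit $p(\alpha)$, and the proof reduces to verifying the concrete inequality $p(\alpha)>H(\alpha)$ at $\alpha=\alpha^*$, which the paper checks by hand for every $d\ge 3$, $t\ge 1$ (separately for $t$ even and odd). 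Vertices whose ball is not a tree are handled by a separate Markov bound.

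\medskip

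\textbf{Where your argument leaks.} First, as you note, Janson's inequality is stated for product measures and does not apply to a union of perfect matchings; you do not actually carry out either proposed workaround. The paper sidesteps this entirely by the exact count. Second, even granting a Janson-type bound, your endgame is not rigorous for fixed $d,t$. You write $\sum_{\ell\le t} d(d-1)^{\ell-1}\sim (d-1)^t$, dropping the factor $d/(d-2)$; and you write $H(\alpha^*)=(1+o(1))\alpha^* t\log(d-1)$, which uses $H(\alpha)\sim \alpha\log(1/\alpha)$ and $\log(1/\alpha^*)\sim t\log(d-1)$ --- both asymptotics require $\alpha^*\to 0$, i.e.\ $t\to\infty$ or $d\to\infty$, whereas the proposition fixes $d$ and $t$. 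In fact with your dropped constant the bracket $H(\alpha)-\tfrac12\alpha^2(d-1)^t$ is \emph{positive} at $\alpha^*$ for small cases such as $d=3$, $t=1$. The paper's exact computation is what allows it to verify the threshold $\alpha^*=2t\log(d-1)/(d-1)^t$ for every $d\ge3$, $t\ge1$; your heuristic only identifies the order of magnitude.

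\medskip

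In short: your alternative~(ii) is the right idea and is what the paper actually does, but it must be executed as an exact count (not a per-step probability bound), and the final inequality $p(\alpha^*)>H(\alpha^*)$ must be checked directly rather than via large-$t$ asymptotics.
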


Before proving \cref{prop:indep-set-bound}, let us show how these two propositions imply the theorem.

\begin{proof}[Proof of \cref{thm:tree-independent-set-bound}]
Let $\Gamma$ be a transitive group of automorphisms of $\T_d$, and let $X$ be a $t$-separated set in $\T_d$ which is a $\Gamma$-factor of an i.i.d.\ process $Y$.
For fixed $r \ge 0$, denote
\[ Z_v = \E[X_v\mid (Y_u)_{u \in B(v,r)}]. \]
By Levy's 0-1 law, since $X_v\in\{0,1\}$ is a factor of $Y$ we have $Z_v \to X_v$ as $r \to \infty$ almost surely.
Define a process $X'$ by
\[ X'_v := \mathbf{1}_{\{ Z_v > 0.5 \text{ and } Z_w \le 0.5 \text{ for all }w \in B(v,t) \setminus \{v\}\}}  .\]
Then $X'$ is a block factor of $Y$ with radius $r+t$ (meaning that there is a $\Gamma$-equivariant map $\varphi$ such that $\varphi(Y) = X'$ and $\varphi(Y)$ at a vertex $v$ depends only on the value of $Y$ in a ball of radius $r+t$ around $v$ almost surely for any $v$).
Moreover $X'$ is $t$-separated, and $X'_v \to X_v$ almost surely as $r \to \infty$.
In particular,  $\P(X'_v=1) \to \P(X_v=1)$ as $r \to \infty$.
Thus it suffices to prove the stated bound for $\P(X'_v=1)$.

Fix a vertex $\rho \in \T_d^*$, let $\cB$ be the ball of radius $r'=r+t$ around $\rho$ in $\T^*_d$ (seen as an edge colored graph, rooted at $\rho$), and let $B$ be its vertex set.
Suppose that $Y$ takes values in $S$ and let $\varphi \colon S^B \to \{0,1\}$ be measurable and such that $X'_\rho = \varphi(Y|_B)$ almost surely.
In fact, we may also require that $\varphi$ has some form of invariance related to $\Gamma$, but we will not need this. Consider the random graph $G^*_{n,d}$ and let $\cB^*_u$ be the rooted colored subgraph induced by the ball of radius $r'$ around $u$ (and rooted at $u$). Let $B^*_u$ be the vertex set of $\cB^*_u$. Let $Y^*$ be an independent random field on the vertices of $G^*_{n,d}$, consisting of independent copies of $Y_\rho$. Define another random field $X^*$ on the vertices of $G^*_{n,d}$ by
\[ X^*_u = \begin{cases} \varphi(Y^*|_{B^*_u}) &\text{if $\cB^*_u$ and $\cB$ are isomorphic as rooted colored graphs} \\0 &\text{otherwise} \end{cases} .\]
Note that when $\cB^*_u$ and $\cB$ are isomorphic as rooted colored graphs, there is a unique isomorphism (since each ``internal'' vertex has exactly one incident edge of each of the $d$ colors), and so there is a unique way to interpret $Y^*|_{B^*_u}$ as an element of $S^B$. This ensures that $X^*$ is well defined.
Observe that $X^*$ is almost surely $t$-separated and $\P(X^*_u=1)$ does not depend on $u$. Thus, \cref{prop:indep-set-bound} implies that
\[ \P(X^*_u=1) \le \frac{2t \log(d-1)}{(d-1)^{t}} + o(1) , \qquad \text{ as } n \to \infty. \]
Since \cref{prop:local-convergence} implies that $\cB^*_u$ and $\cB$ are isomorphic as rooted colored graphs with high probability as $n\to\infty$, we conclude that
\[ \P(X'_\rho=1) \le \frac{2t\log(d-1)}{(d-1)^{t-1}}. \]
This completes the proof. 
\end{proof}

\subsection{Proof of \cref{prop:indep-set-bound}}

Our proof follows the fundamental method used by Bollob{\'a}s~\cite{bollobas1981independence} for the case $t=1$:
We bound the probability that a fixed set of size $\alpha n$ is $t$-separated in $G^*_{n,d}$, and then conclude by a union bound that $t$-separated sets of this size typically do not exist for
\[ \alpha := \frac{2t\log(d-1)}{(d-1)^{t}}.
\]

Denote the number of perfect matching of $n$ elements for even $n$ (i.e., partitions of $n$ elements into unordered pairs) by
\[ N = N(n) := \frac{n!}{2^{n/2} (n/2)!} .\]
Thus $G^*_{n,d}$ is uniformly chosen among $N^d$ possible edge colored graphs.
Roughly speaking, we estimate the probability that a given set $I$ of size $k=\alpha n$ is $t$-separated and then union bound over $I$.
We consider separately the cases when the separation distance $t$ is even or odd.
Of course, one can obtain a weaker bound for the case $t=2s+1$ from the case $t=2s$.
Nevertheless, we provide a proof of the tighter bound in both cases as it maybe of independent interest.

Before starting, let us introduce some notation.
As noted, we will be looking at sets $I$ of size $\alpha n$.
The volume of a ball of radius $i$ in the $d$-regular tree $\T_d$ and the number of vertices on its (internal) boundary are denoted by
\begin{align*}
  b_i & := \frac{d(d-1)^i-2}{d-2}, &
  \partial b_i & := b_i - b_{i-1} = d(d-1)^{i-1}.
\end{align*}
We also define
\begin{align*}
  \alpha_i &:= \alpha b_i = \alpha \frac{d(d-1)^i-2}{d-2}, &
  \beta_i & := \alpha \partial b_{i+1} = \alpha d(d-1)^i, &
  \gamma &:= 2\alpha \frac{(d-1)^s-1}{d-2}.
\end{align*}

\paragraph{Even separation distance.}
Fix an even $t=2s$.
Let $B_v$ denote the ball of radius $s$ around $v$ in $G^*_{n,d}$,
and let $\cE(I)$ be the event that $I$ is $t$-separated,
and that for each $v\in I$ the ball of radius $s$ around $v$ in $G^*_{n,d}$ is a tree.

\begin{claim}
  With the above notation,
  \[
    \P(\cE(I)) \asymp \Big[(1-\alpha)^{(1-\alpha)} (1-\alpha_s)^{-(1-\alpha_s)} (1-\gamma)^{d(1-\gamma)/2}\Big]^n,
  \]
  where the constants implicit in $\asymp$ depend only on $\alpha$ and $d$.
\end{claim}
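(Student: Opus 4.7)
The first step is to unpack the event $\cE(I)$ combinatorially. Since $t=2s$ and we are insisting that each $s$-ball be a tree, $\cE(I)$ is precisely the event that the subgraphs $\{B_v\}_{v\in I}$ are pairwise vertex-disjoint and each $B_v$ is isomorphic, as an edge-colored rooted graph, to the depth-$s$ ball of $\T_d^*$ rooted at $v$. Because the edge-coloring of $\T_d^*$ is rigid (the $d$ edges at every vertex bear distinct labels), such a ball has no nontrivial rooted color-preserving automorphism, so each non-root position in the ball corresponds to a unique labeled slot. Consequently, a realization of $\cE(I)$ is uniquely specified by an injective assignment of the $(b_s-1)\alpha n=(\alpha_s-\alpha)n$ non-root slots to vertices of $[n]\setminus I$, together with the requirement that each of the $d$ perfect matchings contain the corresponding colored edges.

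The second step is to compute $\P(\cE(I))$ exactly. The number of injective assignments is $((1-\alpha)n)!/((1-\alpha_s)n)!$. For each such assignment the set of forced edges splits by color, and a short symmetry argument (using $e_{i,c}=(d-1)^i$ color-$c$ edges at depth $i$ of a single ball, summed to $((d-1)^s-1)/(d-2)$ per ball) shows that each color is forced to contain exactly $\gamma n/2$ specific disjoint pairs. Since a uniform perfect matching of $n$ half-edges contains a given set of $k$ disjoint pairs with probability $N(n-2k)/N(n)$, and the $d$ matchings are independent, we obtain
\[
  \P(\cE(I)) \;=\; \frac{((1-\alpha)n)!}{((1-\alpha_s)n)!}\cdot\Bigl(\frac{N((1-\gamma)n)}{N(n)}\Bigr)^{d}.
\]

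The third step is to apply Stirling. Using $\log m! = m\log m - m + O(\log m)$ and $\log N(m) = \tfrac{m}{2}\log m - \tfrac{m}{2}+O(\log m)$, the contributions of the $n\log n$ terms to $\log\P(\cE(I))$ are $(\alpha_s-\alpha)n\log n$ from the factorials and $-\tfrac{d\gamma}{2}n\log n$ from the matchings. These cancel thanks to the key identity
\[
  \alpha_s-\alpha \;=\; \alpha(b_s-1) \;=\; \frac{\alpha d((d-1)^s-1)}{d-2} \;=\; \frac{d\gamma}{2},
\]
and likewise the linear constants $-(\alpha_s-\alpha)n$ and $+\tfrac{d\gamma}{2}n$ cancel. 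What survives is
\[
  \log\P(\cE(I)) \;=\; \bigl[(1-\alpha)\log(1-\alpha) - (1-\alpha_s)\log(1-\alpha_s) + \tfrac{d}{2}(1-\gamma)\log(1-\gamma)\bigr]\,n + O(\log n),
\]
which on exponentiation yields the claimed $\asymp$-asymptotic, with the $O(\log n)$ corrections absorbed into the implicit constants that depend only on $\alpha$ and $d$.

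The main obstacles are bookkeeping rather than conceptual: one must verify that the color-by-color edge count is exactly $\gamma n/2$ (this uses the recursion $e_{i,c}=d(d-1)^{i-1}-e_{i-1,c}$ and in particular fails silently if one forgets that the coloring makes each ball rigid) and that the two identities $\alpha_s-\alpha=d\gamma/2$ both for the $n\log n$ and for the linear $n$ terms produce the desired cancellation. Once these are in place, the calculation reduces to routine Stirling.
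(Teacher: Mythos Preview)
Your argument is correct and reaches the same exact formula as the paper, though by a slightly different (and arguably cleaner) route. The paper explores the balls layer by layer: at step $i$ it counts the ways to match the $\beta_i n$ outgoing half-edges from level $i$ to fresh vertices, obtaining
\[
  \P(\cE(I)) = \prod_{i=0}^{s-1} (n(1-\alpha_i))_{\beta_i n} \cdot \frac{N(n(1-\gamma))^d}{N(n)^d},
\]
and only afterward uses $\beta_i=\alpha_{i+1}-\alpha_i$ to telescope. You instead count globally: assign all $(\alpha_s-\alpha)n$ non-root slots at once, giving the falling-factorial ratio $((1-\alpha)n)!/((1-\alpha_s)n)!$ directly, then condition on each color-matching containing its $\gamma n/2$ forced pairs. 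Since the layer product telescopes to precisely your factorial ratio, the two formulas coincide. Your way avoids the inductive bookkeeping; the paper's way makes the exploration picture more explicit. Both rely on the same observation that each color contributes exactly $\sum_{i=0}^{s-1}(d-1)^i=\tfrac{(d-1)^s-1}{d-2}$ forced edges per ball.

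One small point to tighten: you write that the $O(\log n)$ error from Stirling is ``absorbed into the implicit constants'', but an $O(\log n)$ additive error in $\log\P(\cE(I))$ would yield a polynomial-in-$n$ multiplicative factor, not a constant. To get the genuine $\asymp$ in the claim you need the sharper form $\log m! = m\log m - m + \tfrac12\log(2\pi m) + O(1/m)$; then the $\tfrac12\log n$ contributions cancel in each ratio (e.g.\ $\tfrac12\log\frac{(1-\alpha)n}{(1-\alpha_s)n}$ is $O_{\alpha,d}(1)$, and similarly for the $N$-ratios), leaving an honest $O(1)$ error. This is the same level of precision the paper uses implicitly.
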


\begin{proof}
  Note that $I$ is $t$-separated exactly when the balls $(B_v)_{v \in I}$ are pairwise disjoint.
  Let $I_k$ be the set of vertices at distance $k$ from $I$, so that $I_0=I$.
  Observe that $\cE(I)$ occurs exactly when all of the following hold:
  \begin{itemize}
  \item Each of the $kd$ half-edges incident to $I_0=I$ are matched to half-edges belonging to $kd$ distinct vertices (which form $I_1$), disjoint from $I$.
  \item Inductively, for every $1 \le i \le s-1$, each of the $kd(d-1)^i$ half-edges incident to $I_i$ but not to $I_{i-1}$ are matched to half-edges belonging to $kd(d-1)^i$ distinct vertices $I_{i+1}$, disjoint from $I_0 \cup \dots \cup I_i$.
  \end{itemize}
  It follows (writing $(m)_j$ for $\frac{m!}{(m-j)!} = m(m-1)\cdots(m-j+1)$) that \[ \P(\cE(I)) = \prod_{i=0}^{s-1} \left(n-k \tfrac{d(d-1)^i-2}{d-2}\right)_{kd(d-1)^i} \cdot \frac{\left[N\left(n - 2k\frac{(d-1)^s-1}{d-2}\right)\right]^d}{N(n)^d} .\]
  The above expression can be easily proved by induction. To explain the product term,
  suppose we have already obtained the ball of radius $i$ in all of the $k$ components for $i \ge 0$.
  To build the $(i+1)$th layer, we need to match $k\partial b_{i+1}$ many half-edges with a color matching half-edge from $n-kb_{i}$ many vertices, and the matching has to be such that each of the remaining vertices is matched to exactly one of the half-edges. The number of ways to do this is $(n-kb_{i})_{k\partial b_i}$. We now take a product over these terms from $i=0$ to $s-1$. 

  The remaining term in the numerator counts all the possible ways to obtain the remaining matching. Indeed, it is easy to see by induction on $i$ that for each $1 \le m \le d$ and $v \in I$, the number of edges of color $m$ joining a vertex at distance $i$ to a vertex at distance $i+1$ from $v$ is the same for every $m$ and $v$, namely, $$\frac1d \partial b_{i+1} = (d-1)^{i}.$$ Therefore the number of half-edges of color $m$ used up is$$2k\sum_{i=0}^{s-1} (d-1)^i = 2k\frac{(d-1)^s-1}{d-2}.$$
  Counting the number of matchings of all the remaining half-edges gives the desired expression.

  \medskip

  In terms of $\alpha_i,\beta,\gamma$ defined above, this becomes
  \[ \P(\cE(I)) = \prod_{i=0}^{s-1} (n(1-\alpha_i))_{\beta_i n} \cdot \frac{N(n(1-\gamma))^d}{N(n)^d} .\]
  Using Sterling's formula
  we get that
  \[ (n(1-\alpha_i))_{\beta_i n} \asymp (\tfrac ne)^{\beta_i n} (1-\alpha_i)^{n(1-\alpha_i)} (1-\alpha_i-\beta_i)^{-n(1-\alpha_i-\beta_i)} \]
  and
  \[ \frac{N(n(1-\gamma))}{N(n)} \asymp (\tfrac en)^{n \gamma/2} (1-\gamma)^{n(1-\gamma)/2} .\]
  Noting that $\beta_i=\alpha_{i+1}-\alpha_i$ and $\alpha_s-\alpha_0=\frac d2 \gamma$, we get the claimed asymptotics
  \[ \P(\cE(I)) \asymp \Big[(1-\alpha)^{(1-\alpha)} (1-\alpha_s)^{-(1-\alpha_s)} (1-\gamma)^{d(1-\gamma)/2}\Big]^n.
    \qedhere
  \]
\end{proof}

\begin{proof}[Proof of \cref{prop:indep-set-bound} for $t$ even]
  Denote
  \[
    p(\alpha) := -\log [(1-\alpha)^{(1-\alpha)} (1-\alpha_s)^{-(1-\alpha_s)} (1-\gamma)^{d(1-\gamma)/2}\Big]
  \]
  so that $\P(\cE(I)) \asymp e^{-n p(\alpha)}$.
  The number of sets $I$ to consider is
  \[
    \binom{n}{\alpha n} \leq e^{H(\alpha) n},
  \]
  where $H(\alpha) := -\alpha \log (\alpha) -(1-\alpha) \log (1-\alpha)$.
  (To see the inequality, note that $e^{-H(\alpha) n} \binom{n}{\alpha n}$ is one of the terms in the binomial expansion of $1=(\alpha+(1-\alpha))^n$.)
  By a union bound over the choices for $I$ of size $\alpha n$, the probability that $\cE(I)$ occurs for some such $I$ is at most $C e^{(H(\alpha)-p(\alpha))n}$.
  By Markov's inequality (recalling also that $E_v$ is the event that the ball around $v$ is a tree),
  \[
    \P(\text{there exists a $t$-separated set of size $\alpha n + m$}) \le \frac{\P(E_v^c) n}{m} +e^{(H(\alpha) - p(\alpha))n}.
  \]
  Indeed, if there exists a $t$-separated set of size $\alpha n+m$, then either it contains more than $m$ vertices $v$ for which $E_v$ does not occur, or else it contains a subset $I$ of size $\alpha n$ for which $\cE(I)$ occurs.
  It remains to check that $p(\alpha) < H(\alpha)$.
  Given that, set $m = \sqrt{\P(E_v^c)} n = o(n)$.
  Since $E_v$ occurs with probability tending to 1 as $n \to \infty$, it follows that the largest $t$-separated set has size at most $\alpha n + o(n)$ with probability tending to 1.

  \medskip

  Thus it remains to show that $p(\alpha)>H(\alpha)$, which is nothing more than a tricky algebra problem.
  This is equivalent to 
  \[ \tfrac d2 (1-\gamma) \log (1-\gamma) < (1-\alpha_s)\log (1-\alpha_s) + \alpha \log \alpha .\]
  We have the Taylor series $(1-x)\log(1-x) = -x + \sum_{i=2}^\infty x^i/(i^2-i)$.
  Using this and recalling that $\alpha_s - \frac d2 \gamma = \alpha$, we arrive at the equivalent inequality
  \[
    \alpha(1-\log\alpha) < \sum_{i=2}^\infty \frac{\alpha_s^i - \frac{d}2 \gamma^i}{i^2-i}. 
  \]
  Using that $\alpha_s \ge \frac d2 \gamma$, it easily follows that each term on the right-hand side is non-negative.
  Thus, discarding all terms other than $i=2$, we see that it suffices to show that
  \[ \alpha_s^2 - \tfrac d2 \gamma^2 > 2\alpha(1-\log\alpha) .\]
Plugging in the definitions of $\alpha_s$ and $\gamma$, and simplifying, we get the equivalent inequality
\[ \frac{d(d-1)^{2s} - 2}{d-2} > \frac 2\alpha(1-\log\alpha) .\]
Noting that the left hand side is at least $(d-1)^{2s}$ (since $\frac{dx-2}{d-2} \ge x$ if and only if $x \ge 1$), we get the sufficient inequality
\[ (d-1)^{2s} \ge \frac 2\alpha(1-\log\alpha). \]
This is indeed satisfied by (and explains) our choice of $\alpha =\frac{4s \log (d-1)}{(d-1)^{2s}}$.
\end{proof}

\paragraph{Odd separation distance.}
The odd case is very similar to the even case, with a few minor differences that we shall emphasize.
Suppose that $t=2s-1$ is odd.
As before, let $B_v$ denote the ball of radius $s$ around $v$ in $G^*_{n,d}$.
Then $I$ is $t$-separated exactly when the \emph{edge sets} of $(B_v)_{v \in I}$ are pairwise disjoint, but in contrast to the even case it is allowed for boundary vertices to overlap.
Let $E_v$ be the event that $B_v$ is a tree, and let $\cE(I)$ be the event that $I$ is $t$-separated and $E_v$ occurs for all $v\in I$.

\begin{claim}
  With the above notations, $\P(\cE(I)) \asymp e^{-p(\alpha)n}$, where
  \[ e^{-p(\alpha)} := (1-\alpha)^{(1-\alpha)} (1-\alpha_{s-1})^{(d-1)(1-\alpha_{s-1})} (1-\gamma)^{-d(1-\gamma)/2} .\]
\end{claim}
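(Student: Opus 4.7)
The proof mirrors the even case closely; the essential change is that when $t=2s-1$, two points of $I$ at distance exactly $2s$ have balls of radius $s$ whose edges are disjoint but which share the midpoint vertex at depth $s$. Following the even-case philosophy, I would count favorable edge-colored matchings and divide by $N(n)^d$, by building the balls $B_v$ in three stages.

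\emph{Stage 1 (interior, depths $0$ through $s-1$)} proceeds exactly as in the even case but truncated one layer earlier, contributing $\prod_{i=0}^{s-2}(n(1-\alpha_i))_{n\beta_i}$. \emph{Stage 2 (depth-$s$ boundary)} attaches the outermost layer; now target vertices may coincide across different roots (so long as at any outside vertex each color is used at most once). A short induction shows that the number of outgoing color-$c$ half-edges from $I_{s-1}$ is exactly $k(d-1)^{s-1}$: writing $a_i$ for the number of color-$c$ edges from depth $i$ to $i+1$ in a single ball, one has $a_0=1$ and $a_i=\partial b_i-a_{i-1}$, hence $a_i=(d-1)^i$. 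Thus Stage 2 contributes $\big[(n(1-\alpha_{s-1}))_{k(d-1)^{s-1}}\big]^d$. \emph{Stage 3 (completion)} is routine: a short calculation using $b_{s-1}+(d-1)^{s-1}=2\tfrac{(d-1)^s-1}{d-2}$ confirms that exactly $n\gamma$ color-$c$ half-edges are consumed in Stages 1 and 2, so the remaining $n(1-\gamma)$ per color contribute $N(n(1-\gamma))^d$.

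Combining,
\[\P(\cE(I)) = \prod_{i=0}^{s-2}(n(1-\alpha_i))_{n\beta_i} \cdot \big[(n(1-\alpha_{s-1}))_{k(d-1)^{s-1}}\big]^d \cdot \frac{N(n(1-\gamma))^d}{N(n)^d}.\]
Applying Stirling's formula as in the even case: Stage 1 telescopes via $\alpha_i+\beta_i=\alpha_{i+1}$ to $(n/e)^{n(\alpha_{s-1}-\alpha)}(1-\alpha)^{n(1-\alpha)}(1-\alpha_{s-1})^{-n(1-\alpha_{s-1})}$; Stage 2 yields $(n/e)^{n\beta_{s-1}}(1-\alpha_{s-1})^{dn(1-\alpha_{s-1})}(1-\gamma)^{-dn(1-\gamma)}$, after noting $\alpha_{s-1}+\alpha(d-1)^{s-1}=\gamma$; and Stage 3 contributes $(e/n)^{dn\gamma/2}(1-\gamma)^{dn(1-\gamma)/2}$. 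Using $\alpha_{s-1}+\beta_{s-1}=\alpha_s$ and the identity $\alpha_s-\alpha=\tfrac d2\gamma$ from the even case, the $n/e$-type prefactors cancel, and combining the remaining exponents gives exactly the claimed $e^{-p(\alpha)n}$. The main subtlety is the relaxed disjointness in Stage 2 and the corresponding per-color bookkeeping; the rest is routine asymptotic analysis that parallels the even case.
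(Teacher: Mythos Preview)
Your proof is correct and follows essentially the same approach as the paper's: the exact formula you derive for $\P(\cE(I))$ matches the paper's (noting $k(d-1)^{s-1}=\beta_{s-1}n/d$), and your identity $\alpha_{s-1}+\alpha(d-1)^{s-1}=\gamma$ is precisely the paper's $\alpha_{s-1}+\tfrac1d\beta_{s-1}=\gamma$. You supply more detail than the paper does---the explicit recursion $a_i=\partial b_i-a_{i-1}$ for the per-color edge count and the full Stirling bookkeeping with the cancellation of the $(n/e)$ prefactors---but the structure and ideas are identical.
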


\begin{proof}
Observe that $\cE(I)$ occurs exactly when all of the following hold:
\begin{itemize}
\item If $s>1$, then for every $0 \le i \le s-2$, each of the $kd(d-1)^i$ half-edges incident to $I_i$ but not to $I_0 \cup \dots \cup I_{i-1}$ are matched to half-edges belonging to $kd(d-1)^i$ distinct vertices $I_{i+1}$, disjoint from $I_0 \cup \dots \cup I_i$ (where $I_0:=I$).
\item Each of the $kd(d-1)^{s-1}$ half-edges incident to $I_{s-1}$ but not to $I_0 \cup \dots \cup I_{s-2}$ are matched to half-edges belonging to some set of vertices $I_s$, disjoint from $I_0 \cup \dots \cup I_{s-1}$.
  Here, $I_s$ might have size smaller than $kd(d-1)^{s-1}$, as any vertex in $I_s$ can be used several times, namely, once for each color.
\end{itemize}
It follows (with $\alpha_i$, $\beta_i$ and $\gamma$ defined as before) that
\[ \P(\cE(I)) = \prod_{i=0}^{s-2} (n(1-\alpha_i))_{\beta_i n} \cdot \Big((n(1-\alpha_{s-1}))_{\beta_{s-1}n/d}\Big)^d \cdot \frac{N(n(1-\gamma))^d}{N(n)^d} .\]
Using Sterling's formula and that $\alpha_{s-1}+\frac1d \beta_{s-1}=\gamma$, we get the claimed probability.
\end{proof}

\begin{proof}[Proof of \cref{prop:indep-set-bound} for $t$ odd]
  As before, it suffices to show that $p(\alpha)>H(\alpha)$.
  This is equivalent to
  \[ (d-1)(1-\alpha_{s-1})\log (1-\alpha_{s-1}) < \tfrac d2 (1-\gamma)\log(1-\gamma) + \alpha \log\alpha .\]
  Using the same Taylor expansion and using that $\frac d2 \gamma - (d-1)\alpha_{s-1} = \alpha$, we arrive at the equivalent inequality
  \[ \sum_{i=2}^\infty \frac{ \tfrac d2 \gamma^i - (d-1)\alpha_{s-1}^i}{i^2-i} > \alpha(1- \log\alpha) .\]
  Noting that all terms in the sum are non-negative, it suffices that
  \[ \tfrac d2 \gamma^2 - (d-1)\alpha_{s-1}^2 > 2\alpha(1-\log\alpha) .\]
  Plugging in the definitions of $\alpha_{s-1}$ and $\gamma$, we get
  \[ \frac{d(d-1)^{2s-1} - 2}{d-2} > \frac 2\alpha(1+\log\tfrac1\alpha) ,\]
  which holds for $\alpha=\frac{2(2s-1) \log (d-1)}{(d-1)^{2s-1}}$.
\end{proof}

%%%%%%%%%%%%%%%%%%%%%%%%%%%%%%%%%%%%%%%%%%%%%%%%%%%%%%%%%%%%%%%%%%
\section{Balloons on trees}\label{sec:balloons-in-tree}

In this section, we prove \cref{thm:main3}.
Recall from~\eqref{eq:upper_limsup} that $\liminf_{t \to \infty} R_t/t \le 2$ almost surely.
Our goal is thus to show the opposite inequality.
We in fact show that, almost surely,
\[
R_t \ge 2t - O(\log t) \qquad\text{as }t \to \infty.
\]

We identify the discrete tree $\T_d$ with the vertices of the continuous space $\cT_d$.
Given a collection of points $P$ in $\cT_d$, let $\pi(P)$ denote their projection to $\T_d$.
Specifically, $x \in \pi(P)$ if and only if $P$ contains a point at distance less than $1/2$ from $x$.
(For concreteness we discard points lying precisely in the middle of an edge, though this is not very important.)
Observe that for any $t \ge 1/2$, $\pi([\Pi_t])$ is almost surely $(2t-1)$-separated in $\T_d$.
Since the Poisson process is invariant under $\text{Aut}(\T_d)$, $\pi([\Pi_t])$ is an Aut$(\T_d)$-equivariant factor of an i.i.d.\ process on $\T_d$.
Indeed, on each vertex of $\T_d$, we attach $d$ (ordered collection) of i.i.d.\ Poisson processes on the interval $[0,1/2)$ and a Uniform$[0,1]$ random variable independent of everything else.
The latter creates an ordering of the half-edges incident to a vertex, to which we assign the $d$ Poisson processes in order.
We can run the balloon process to describe $\pi([\Pi_t])$. Clearly, this is an equivariant function of these i.i.d.\ variables.

Therefore, \cref{thm:tree-independent-set-bound} implies that
\[ \P(\pi([\Pi_t])_v=1) \le C_d t (d-1)^{-2t} \]
(we use the obvious bound $\alpha_{d,t} (\text{Aut}(\T_d)) \le \alpha_{d,t}$ here).
Since the ball volumes grow like $(d-1)^s$ as $s \to \infty$, we have that for any positive integer $s$,
\[ \P(R_t \le s) \le C'_d t (d-1)^{s-2t} .\]
Taking $s(t) := 2t - \frac{3\log t}{\log(d-1)}$, we get that
\[ \P(R_t \le s(t)) \le C'_d t^{-2} .\]
By Borel-Cantelli, we have that, almost surely,
\[ R_n > s(n) \qquad\text{for all large enough integer }n .\]
Since $R_t$ is increasing, we may interpolate between integers.
Thus, almost surely,
\[ \limsup_{t \to \infty} \frac{2t-R_t}{\log t} \le \frac{3}{\log(d-1)} .\]

%%%%%%%%%%%%%%%%%%%%%%%%%%%%%%%%%%%%%%%%%%%%%%%%%%%%%%%%%%%%%%%%%%
\section{Balloons in the hyperbolic plane}

In this section, we prove \cref{thm:main2}.
Throughout this section, our measured metric space is the hyperbolic plane $\HH$ with measure $\mu$ and metric $\rho$.
Recall from~\eqref{eq:upper_limsup} that $\liminf_{t \to \infty} R_t/t \le 2$ almost surely. On the other hand, the analogue of \cref{lem:covering} does not lead to an analogue of \cref{T:limsup} because of the lack of volume doubling property. Indeed, \cref{thm:main2} states that $\liminf_{t \to \infty} R_t/t$ is almost surely strictly positive, so the analogue of \cref{T:limsup} simply fails for the hyperbolic plane.

The proof of the lower bound on $\liminf_{t \to \infty} R_t/t$ is morally similar to that in \cref{sec:balloons-in-tree} for a tree.
For the continuous tree, we divided $\cT_d$ into ``fundamental domains'' which were balls of radius $1/2$ around the vertices of the tree.
Here we divide the hyperbolic plane into fundamental domains which are the ideal triangles dual to some embedding of a 3-regular tree $\T_3$ in $\HH$ (see \cref{fig:tree_in_H}).
We embed the tree in such a way that the group $\Gamma$ of isometries of $\HH$ that preserve $\T_3$ acts transitively on $\T_3$. 

\begin{figure}
  \centering
  \includegraphics[width=.55\textwidth]{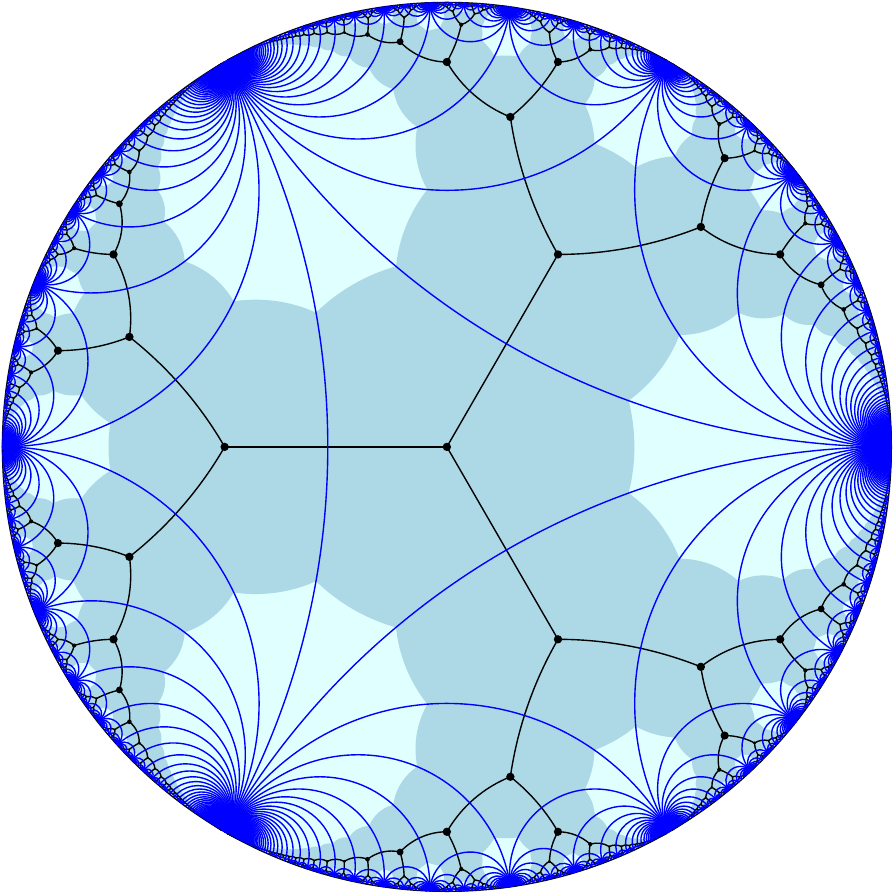}
  \caption{The embedding of the 3-regular tree in the hyperbolic plane.}
  \label{fig:tree_in_H}
\end{figure}

Let us define the tessellation more carefully.
We shall use the Poincar\'e disc model of  the hyperbolic plane (with constant curvature $-1$).
Start with an ideal triangle with corners at $1, e^{i\frac{2\pi}{3}}$ and $e^{i\frac{4\pi}{3}}$.
Iteratively reflect along the edges to obtain the tessellation.
The iterative images of the origin under this mapping will define the vertices of the embedding of the 3-regular tree.
This defines a proper embedding of the 3-regular tree in $\HH$.

A significant difference between the situation in the hyperbolic plane and a regular tree is that the fundamental domains here have infinite diameter (whereas they were bounded in the tree case).
A second difference is that $\Gamma$ is not the full automorphism group of $\T_3$.
This will not be a problem given what we have shown in~\cref{sec:indep-sets-in-tree} (we allowed the factors there to be not completely invariant).

Let $p \colon \HH \to \T_3$ be the projection that maps a point $x \in \HH$ to the a nearest point in $\T_3$ (breaking ties arbitrarily).
For $v \in \T_3$, let $\Delta_v$ be the ideal triangle dual to $v$ (i.e., $\Delta_v$ is the closure of $\pi^{-1}(v)$).
As noted, the projection $p$ can heavily distort, as arbitrarily far away points may be mapped to neighboring vertices of the tree.
We deal with this by truncating the triangles.
For some $r>0$ to be fixed shortly,
split each triangle into a disjoint union $\Delta_v = \Delta'_v \cup K_v$, where
\begin{align*}
  \Delta'_v &= \{x \in \Delta_v : d_\HH(x,v)\leq r\},
  & K_v &= \{x \in \Delta_v : d_\HH(x,v) > r\}.
\end{align*}
Thus $K_v$ consists of three caps near the cusps of $\Delta_v$ (see \cref{fig:tree_in_H}).
Since the hyperbolic area (denoted $\mu$) of each triangle is $\pi$, we fix $r$ so that $\mu(\Delta'_v) = \mu(K_v) = \pi/2$
(this choice is fairly arbitrary).
The main geometric step is that restricted to the $\Delta'_v$ the projection has bounded distortion:

\begin{lemma}\label{C:hyperbolic-tree-dist}
  With the above construction, for any $x,y\in \bigcup \Delta'_v$ we have
  \begin{equation}\label{eq:hyperbolic-tree-dist}
    \rho(x,y) \le a \cdot d_{\T_3}(\pi(x),\pi(y)) + c,
  \end{equation}
  where $a=2\log\frac{1+\sqrt5}{2}$ and
  $c = 2r+\log 3-2\log\frac{1+\sqrt5}{2}$. 
\end{lemma}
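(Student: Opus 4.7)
The plan is to build an explicit path from $x$ to $y$ that follows the tree geodesic between $\pi(x)$ and $\pi(y)$, cutting across each intermediate ideal triangle via the midpoints of its sides. Write $v_0=\pi(x),v_1,\dots,v_n=\pi(y)$ for the tree geodesic (so $n = d_{\T_3}(\pi(x),\pi(y))$), and for $i=1,\dots,n$ let $e_i$ be the ideal geodesic separating $\Delta_{v_{i-1}}$ from $\Delta_{v_i}$. Define $w_i\in e_i$ as the foot of the perpendicular from $v_{i-1}$ onto $e_i$; since the reflection across $e_i$ is an isometry that fixes $e_i$ pointwise and swaps $v_{i-1}$ with $v_i$, the point $w_i$ is also the foot of the perpendicular from $v_i$. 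The candidate path is
\[ x\to v_0\to w_1\to w_2\to\cdots\to w_n\to v_n\to y. \]

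All length bounds reduce to two numerical constants, each independent of $i$ by the transitive action of $\Gamma$ on $\T_3$. Place one ideal triangle in the Poincar\'e disc with vertices $1,\,e^{2\pi i/3},\,e^{-2\pi i/3}$ and centroid at the origin. The midpoint of the side from $e^{2\pi i/3}$ to $e^{-2\pi i/3}$ lies at the real point $\sqrt 3-2$; substituting $|z|=2-\sqrt3$ into $d(0,z)=\log\tfrac{1+|z|}{1-|z|}$ gives $\tfrac12\log 3$. Hence $\rho(v_{i-1},w_i)=\rho(w_i,v_i)=\tfrac12\log 3$ for every $i$, and reflecting $v_{i-1}$ across $e_i$ recovers $v_i$ at distance $\log 3$, confirming that $\log 3$ is the edge length of the embedded tree. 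By the threefold rotational symmetry, the three midpoints of the sides of the triangle sit at Euclidean radius $2-\sqrt 3$ at angles $\pi$ and $\pm\pi/3$. Feeding two of these into the disc distance formula and simplifying yields the pairwise distance $\operatorname{arccosh}(3/2)=\log\tfrac{3+\sqrt 5}{2}=2\log\tfrac{1+\sqrt 5}{2}=a$, so $\rho(w_i,w_{i+1})=a$ for $1\le i\le n-1$.

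The triangle inequality then gives
\[ \rho(x,y)\le r + \tfrac12\log 3 + (n-1)a + \tfrac12\log 3 + r = a\cdot n + (2r+\log 3 - a) = a\cdot n + c, \]
which is the stated bound; the degenerate cases $n\in\{0,1\}$ are subsumed by the simpler estimates $\rho(x,y)\le 2r$ and $\rho(x,y)\le 2r+\log 3$ via the route $x\to v_0\to v_n\to y$, both of which are dominated by $c$ and $a+c$ respectively. The one step that is not a matter of triangle inequality or symmetry is the identification $\operatorname{arccosh}(3/2)=2\log\phi$, with $\phi$ the golden ratio; this single algebraic computation produces the specific constant $a$ in the statement, and is the spot where an arithmetic slip is easiest to make, but it is otherwise routine.
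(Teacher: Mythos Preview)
Your proof is correct and follows essentially the same route as the paper's: construct a path through the edge midpoints $w_1,\dots,w_n$, use $\rho(v_i,w_i)=\tfrac12\log 3$ and $\rho(w_i,w_{i+1})=a$, and apply the triangle inequality to obtain $2r+\log 3+(n-1)a$. The only difference is cosmetic: the paper bounds $\rho(x,z_1)\le r+\tfrac12\log 3$ in one step rather than writing $x\to v_0\to w_1$, and it omits the explicit Poincar\'e-disc computations of $\tfrac12\log 3$ and $a=\operatorname{arccosh}(3/2)$ that you carry out in detail.
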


\begin{proof}
  Consider an ideal triangle in $\HH$ with center at $u$.
  Let $z,z',z''$ be the projection of $u$ onto its three edges.
  A straightforward calculation (which we omit) yields that
  \begin{align*}
    \rho(u,z) &= \frac12 \log 3, &
    \rho(z,z') &= a := 2\log\frac{1+\sqrt5}{2}.
  \end{align*}

  Suppose now that $d_{\T_3}(\pi(x),\pi(y))=\ell$,
  and let $z_1,\dots,z_\ell$ be the midpoints of the edges on the path (as embedded in $\HH$).
  We have that $\rho(x,z_1) \leq r+\rho(\pi(x),z_1) = r+\frac12\log 3$, and the same bounds holds for $\rho(y,z_\ell)$.
  Since $\rho(z_i,z_{i+1}) = a$, the triangle inequality gives
  \[
    \rho(x,y) \leq 2r + \log 3 + (\ell-1)a,
  \]
  which is the claimed bound.
\end{proof}

\begin{remark}
  The additive constant $c$ in~\eqref{eq:hyperbolic-tree-dist} is not significant later on.
  The multiplicative constant $a$ is the best possible.
  Indeed, if the path in the tree is a zig-zag path that alternates left and right turns, then the points $z_i$ are all on a hyperbolic geodesic, and thus $\rho(z_1,z_\ell) = (\ell-1)a$.
\end{remark}

\begin{proof}[Proof of \cref{thm:main2}]
  Recall that $\Pi_t$ is the counting measure for the set of centers still active at time $t$.
  Let $\lambda_t$ denote the intensity of $\Pi_t$, which, by the invariance of the balloon process to isometries of $\HH$, is defined by
  \[ \lambda_t := \frac{\E\Pi_t(A)}{\mu(A)} ,\]
  where $A$ is any subset of $\HH$ with finite positive area.
  (This is because $A \mapsto \E \Pi_t(A)$ is an invariant measure, and any such measure is a constant multiple of the hyperbolic measure.)
  Our first goal is to bound $\lambda_t = \frac2\pi \E\Pi_t(\Delta'_v)$.

  Suppose that $t$ is large, and let $\Pi'_t$ denote the restriction of $\Pi_t$ to $\bigcup_v \Delta'_v$.
  Note that, almost surely, any two points in $[\Pi'_t]$ are at distance greater than $2t$ in $\HH$ (since this is true for $[\Pi_t]$).
  For $t>2r$ it follows that $[\Pi'_t]$ contains at most one point in each truncated triangle $\Delta'_v$ (since $\Delta'_v$ has diameter $2r$).
  Let $p(\Pi'_t) \subset \T_3$ denote the projection of $[\Pi'_t]$.
  \cref{C:hyperbolic-tree-dist} implies that $p(\Pi_t)$ is almost surely $\frac{2(t-c)}a$-separated in $\T_3$, with $a$ and $c$ from the lemma.
  Observe also that $p(\Pi'_t)$ is a $\Gamma$-factor of an i.i.d.\ process on $\T_3$ (this is similar to the case of the tree in \cref{sec:balloons-in-tree}).
  Thus, \cref{thm:tree-independent-set-bound} applied for $\frac{2(t-c)}{a}$-separated processes on $\T_3$ implies that
  \[ \E \Pi_t(\Delta'_v)
    = \E p(\Pi'_t)_v
    \leq \frac{4(t-c)}{a} \log2 \cdot 2^{-2(t-c)/a}. \]
  We conclude that
  \[ \lambda_t \le \frac{8t\log 2}{\pi a} \cdot 2^{-2(t-c)/a}
    = C t 4^{-t/a} .\]

  Let $V(s) := 4\pi \sinh^2 (s/2)$ denote the area of of a ball of radius $s$ in $\HH$ (recall that the curvature is taken to be $-1$).
  By Markov's inequality, using $V(s)\leq \pi e^s$, we see that
  \begin{align*}
    \P(R_t \le s) &= \P(\Pi_t(V(s)) \ge 1) \\
    &\leq V(s) \lambda_t \\
    &\leq C t e^s 4^{-t/a}.   \end{align*}
  This is summable for $t\in\N$ if we take
  \[ s=s(t) = \frac{\log 4}{a} t - 3\log t. \]
  Applying Borel--Cantelli, and interpolating using monotonicity for non-integer $t$ (as before), we conclude that
  \[ \liminf_{t \to \infty} \frac{R_t}t \ge \frac{\log 4}a \qquad\text{almost surely}. \]
  Since $a=2\log\frac{1+\sqrt5}{2}$, we are done.
\end{proof}

\section{Open problems}

\begin{problem}
  Give quantitative information about $R_t/t$ as $t\to\infty$ for the Poisson balloon process on the Euclidean space $\R^d$:
  \begin{itemize}[nosep]
  \item Is $\limsup_{t \to \infty} \frac{R_t}t$ finite?
  \item Does $\frac{R_t}t$ converge in distribution as $t \to \infty$?
  \item Does the set $A_c := \{ \log t : R_t \le ct \} \cap [0,\infty)$ have a positive density (for fixed $c>0$)?
  \end{itemize}
\end{problem}

\begin{problem}
  Does the rescaled point process $\{ x : xt \in [\Pi_t] \}$ converge in distribution as $t \to \infty$?
\end{problem}

\begin{problem}
  Give quantitative information about $R_t/t$ as $t\to\infty$ for the Poisson balloon process in the hyperbolic plane $\HH$:
  \begin{itemize}[nosep]
  \item What is $\liminf_{t \to \infty} \frac{R_t}{t}$? Does it depend on the curvature or intensity of the Poisson process? Is it always 2?
  \item What is $\limsup_{t \to \infty} \frac{R_t}t$? Does it depend on the curvature/intensity? Is it always finite?
  \end{itemize}
\end{problem}

We mention that in any reasonable space (all one needs is that every balloon eventually pops), $\limsup_{t \to \infty} \frac{R_t}t \ge 1$ almost surely. In $\R^d$, using our result on the liminf in \cref{thm:main}, it is not hard to see that $\limsup_{t \to \infty} \frac{R_t}t \ge 2$ almost surely. Understanding $\limsup_{t \to \infty} \frac{R_t}t$ on the regular tree $\cT_d$ is also of interest.

\begin{problem}
  Prove that the Poisson balloon process on the $d$-dimensional hyperbolic space $\HH^d$ is transient.
\end{problem}

\begin{problem}\label{prob:lattice}
  Consider the point process in $\R^d$ obtained by independently perturbing (in some reasonable way) the points of the lattice $\Z^d$.
  Show that the corresponding balloon process is recurrent.
\end{problem}

\begin{problem}
  Consider a modified Poisson balloon process in $\R^d$ or $\HH^d$ where each balloon has an independent random rate of growth chosen according to some distribution $\nu$ on $(0,\infty)$. When is this balloon process recurrent/transient (assuming is it well defined)?
\end{problem}

In $\R^d$, when $\nu$ is supported in $[a,b]$ for some $0<a<b<\infty$, simple modifications of the arguments yield that this balloon process is well defined, that it is recurrent and that $\liminf_{t \to \infty} \frac{R_t}t=0$ almost surely.

%\bibliographystyle{amsplain}
%\nocite{*}
\bibliography{balloons}

\end{document}